\newcommand\qedsymbol{\begin{flushright}$\blacksquare$\end{flushright}}
\newcommand{\xp}{x^+}
\newcommand{\xpp}{x^{++}}
\newcommand{\xu}{\overline{x}}
\newcommand{\cmo}[1]{{#1}}
\newcommand{\mt}{accumulative regularization}
\newtheorem{assumption}{Assumption}
\numberwithin{lemma}{section}
\numberwithin{theorem}{section}
\numberwithin{proposition}{section}
\numberwithin{algorithm}{section}
\numberwithin{assumption}{section}
\numberwithin{equation}{section}
\numberwithin{remark}{section}
\begin{document}

\title{Optimal and parameter-free gradient minimization methods for
	convex and nonconvex optimization
	\thanks{This work is partially supported by AFOSR grant FA9550-22-1-0447 and ONR grant N000142412621.}
}


\author{Guanghui Lan         \and Yuyuan Ouyang \and Zhe Zhang
}


\institute{G. Lan \at
	H. Milton Stewart School of Industrial and Systems Engineering, Georgia Institute of Technology, Atlanta, GA \\
	\email{george.lan@isye.gatech.edu}
	\and
	Y. Ouyang \at
        School of Mathematical and Statistic Sciences, Clemson University, Clemson, SC  
     \\
        \email{yuyuano@clemson.edu}
           \and
    Zhe Zhang \at
		School of Industrial Engineering, Purdue University, West Lafayette, IN 
		\\
		\email{zhan5111@purdue.edu}
}

\date{Received: date / Accepted: date}

\maketitle

\begin{abstract}
We propose novel optimal and parameter-free algorithms for computing an approximate solution with small (projected) gradient norm. Specifically, for computing an approximate solution such that the norm of its (projected) gradient does not exceed $\varepsilon$, we obtain the following results:
a) for the convex case, the total number of gradient evaluations is bounded by $\cO(1)\sqrt{L\|x_0 - x^*\|/\varepsilon}$, where $L$ is the Lipschitz constant of the gradient and $x^*$ is any optimal solution; b) for the strongly convex case, the total number of gradient evaluations is bounded by $\cO(1)\sqrt{L/\mu}\log(\|\nabla f(x_0)\|/\epsilon)$, where $\mu$ is the strong convexity modulus; and c) for the nonconvex case, the total number of gradient evaluations is bounded by $\cO(1)\sqrt{Ll}(f(x_0) - f(x^*))/\varepsilon^2$, where $l$ is the lower curvature constant. Our complexity results match the lower complexity bounds of  the convex and strongly cases, and achieve the above best-known complexity bound for the nonconvex case for the first time in the literature. \cmo{Our results can also be extended to problems with constraints and composite objectives.}
Moreover, 
for all the convex, strongly convex, and nonconvex cases, we propose parameter-free algorithms that do not require the input of any problem parameters \cmo{ or the convexity status of the problem}. To the best of our knowledge, there do not exist such parameter-free methods before especially for the strongly convex and nonconvex cases. Since most regularity conditions (e.g., strong convexity and lower curvature) are imposed over a global scope, the corresponding problem parameters  are notoriously difficult to estimate. However, 
gradient norm minimization equips us with
a convenient tool to monitor the progress of
algorithms and thus the ability to estimate
such parameters in-situ.
\keywords{Smooth optimization \and first-order method \and gradient norm minimization \and convex optimization \and nonconvex optimization \and stationary point}
\subclass{90C25 \and 90C06 \and 49M37 \and 93A14 \and 90C15}
\end{abstract}

\section{Introduction}

The basic problem of interest in this paper is to compute an
approximate stationary point $\hat x$ to  
the unconstrained optimization problem
\begin{equation} \label{eq:basic_problem}
	f^* = \min_{x\in\R^n}f(x),
\end{equation}
where $f$ is $L$-smooth over $\R^n$, meaning that $f$ is differentiable and its gradient $\nabla f$ is Lipschitz continuous, i.e., $\|\nabla f(x) - \nabla f(y)\|
\le L \|x - y\|$ for any $x, y \in \R^n$. \cmo{ Here and throughout this paper, the norm $\|\cdot\|$ measured over points $x\in \R^n$ and gradients $\nabla f(x)\in\R^n$ are all Euclidean norm.}
Moreover, we consider three important types of $f$ as follows: (a) $f$ is convex, i.e.,
$f(x) \ge f(y) +\langle \nabla f(y), x-y\rangle$ for any $x, y \in \R^n$; (b) $f$ is $\mu$-strongly convex, i.e., $f(x) \ge f(y) +\langle \nabla f(y), x-y\rangle + \mu \|x-y\|^2/2$ $\forall x, y \in \R^n$ for some $\mu \in (0, L)$;
and (c) $f$ is nonconvex with lower curvature $l$, i.e., $f(x)$ is nonconvex but $f(x) + l \|x\|^2/2$ becomes convex for some $l \in (0, L]$.
We are interested in the following two fundamental
questions: a) how many number of 
gradient evaluations (a.k.a, gradient complexity)
is needed to attain $\|\nabla f(\hat x)\|\le \varepsilon$
for different types of objective function $f$,
and b) how much input information is needed
for the algorithms to find such a solution. 
Note that our discussion will also be generalized to  the constrained case where 
$\nabla f(\hat x)$ is replaced by the projected gradient at $\hat x$. Our understanding for these questions, however, appears to be quite inadequate for both unconstrained and constrained cases.

Let us start with a review on the gradient complexity.
Consider the case when $f$ is convex.
Classic optimal methods that can achieve the best possible
gradient complexity for convex optimization~\cite{nemirovski1983problem,nesterov2004introductory}
were designed for the termination criterion $f(\hat x)-f(x^*)\le\varepsilon$ instead of
the gradient norm $\|\nabla f(\hat x)\| \le \varepsilon$.
It is well-known that the former criterion is difficult to check since $f^*$ is unknown while the latter one is easily verifiable.
Moreover, the latter criterion is considered to be stronger due to
$\|\nabla f(\hat x)\|^2/(2L) \le f(\hat x) - f^* \le \|\nabla f(\hat x)\| \|\hat x - x^*\|$.
However,
for a long period of time, there only exist suboptimal
methods for gradient norm minimization~(see, e.g., \cite{nesterov2012make,ghadimi2016accelerated,nesterov2018lectures} and the references within).
The best earlier upper bounds on gradient complexity \cite{nesterov2018lectures} (see also \cite{allen2018make,foster2019complexity,lu2023accelerated}) is worse than the lower complexity bound $\cO(\sqrt{L\|x_0-x^*\|/\varepsilon})$~\cite{nemirovski1983problem,nemirovski1992information,carmon2020lower} by a logarithmic factor. 
Until recently it has been discovered in a series of  interesting works~\cite{kim2021optimizing,nesterov2021primal,diakonikolas2022potential,lee2021geometric} that
the aforementioned lower complexity bound 
for gradient norm minimization is actually achievable.
More specifically, 
Kim et. al.~\cite{kim2021optimizing} introduces an optimized gradient
method for gradient minimization (OGM-G)
that can find a solution $\hat x$ such that $\|\nabla f(\hat x)\|\le \varepsilon$ within $\cO(\sqrt{L(f(x_0) - f(x^*))}/\varepsilon)$ gradient evaluations.
It was later pointed out in \cite{nesterov2021primal} that by running $N$ iterations of any  optimal gradient method for minimizing objective function value, followed with $N$ iterations of OGM-G, one can compute $\hat x$ with small gradient norm within $\cO(\sqrt{L\|x_0-x^*\|/\varepsilon})$ gradient evaluations. Note that the original OGM-G method was designed by utilizing the framework of performance enhancement program (PEP)~\cite{drori2014performance} to
search for a set of algorithmic parameters in gradient methods through the empirical solutions of a nonconvex semidefinite programming. The authors in \cite{diakonikolas2022potential,lee2021geometric} suggest to analyze OGM-G using different
potential functions, and 
the work \cite{lee2021geometric} also generalizes OGM-G to solve constrained problems. 

In spite of much progresses on optimal gradient  minimization methods for convex optimization, there remain a few important unresolved issues. First, OGM-G requires one to supply the total number of iterations $N$ in advance. As a consequence, we do not actually utilize the easily verifiable condition $\|\nabla f(\hat x)\| \le \epsilon$ to terminate the algorithm. 
Second, to achieve the optimal complexity we need to run two different algorithms consecutively, where
the first algorithm computes an approximate solution $\tilde x$ such that $f(\tilde x) - f(x^*)\le\varepsilon$ and the second one uses $\tilde x$ as the initial point to compute an approximate solution $\hat x$ with $\|\nabla f(\hat x)\|\le\varepsilon$. This two-step approach, although technically sound, appears to lack intuitive interpretation. 
Third, the aforementioned studies focus on the general convex case only, and the best possible complexity bound remains unknown when $f$ is $\mu$-strongly convex. 
Specifically,
by using the accelerated gradient method \cite{nesterov2018lectures} 
we can guarantee $\|\nabla f(\hat x)\|\le\varepsilon$ within $\cO(1)\sqrt{L/\mu}\log(L/(\mu \varepsilon))$ gradient evaluations. It is unclear whether one could remove the logarithmic dependence on the condition number $L/\mu$ to
obtain an optimal $\cO(1) \sqrt{L/\mu}\log(1/\varepsilon)$
complexity bound.
Fourth, 
it remains unclear what is the best possible complexity bound for nonconvex problems. 
Some results have been developed in the literature (see, e.g., \cite{ghadimi2016accelerated,lan2019accelerated,liang2023proximal} and the references therein) and the best-known complexity is given by $\cO(1)\sqrt{L l}(f(x_0) - f(x^*)) (1+\log (L/l))/\varepsilon^2$~\cite{lan2019accelerated}.
Note that if $l = L$ then this bound is known to be optimal~\cite{carmon2020lower}. In fact, the optimal bound in the latter special case can be simply achieved by the gradient descent method. However, it is unclear if the above complexity can be improved for the more general case when $l \in (0, L]$. 

It should be noted that all the works listed in the above discussion require some knowledge of problem parameters (e.g., $L$, $\mu$ and $l$).
These problem parameters are usually difficult to estimate accurately, and a conservative estimation can dramatically slow down the algorithms.
As such, algorithms that can achieve the same complexity bounds without requiring the input of these parameters has attracted much attention recently. Existing studies on these parameter-free methods  have focused on computing an approximate solution $\hat x$ satisfying $f(\hat x) - f^*\le\varepsilon$ (see, e.g., \cite{lan2015bundle,nesterov2015universal,ChenLanOuyangZhang2019bundle,lan2023simple,renegar2022simple,nesterov2013gradient,lin2014adaptive,malitsky2023adaptive,lu2023accelerated} and references therein). In particular, for convex and $L$-smooth functions, parameter-free algorithms with optimal gradient evaluation complexity are often called uniformly (or universally) optimal methods.
These types of methods have been developed first based on accelerated bundle-level method \cite{lan2015bundle} and later on line search procedures for accelerated gradient methods \cite{nesterov2015universal}. Recently, it is shown in \cite{lan2023simple} that line search is not required for uniformly optimal gradient methods.
Note that since most regularity conditions (e.g., strong convexity and lower curvature) are imposed over a global scope, the corresponding problem parameters $\mu$ and $l$ are notoriously difficult to estimate. To the best of our knowledge,
there does not exist any algorithm that does not require the input of $\mu$ and $l$, but can still maintain the same gradient complexity for strongly convex and nonconvex problems.
Since our goal is to find an $\hat x$ s.t. $\|\nabla f(\hat x)\|\le\varepsilon$, there does not exist any parameter-free and optimal algorithm even for the convex case, not to mention the more challenging strongly convex and nonconvex cases.

In order to address these issues mentioned above, in this paper we propose several novel gradient minimization algorithms that can achieve optimal or best-known complexity bounds to solve convex, strongly convex, and nonconvex problems. In fact, our new complexity results for strongly convex and nonconvex problems have never been attained before in the literature. We further show that the proposed algorithms can achieve these complexity bounds without requring the input of any problem parameters. Moreover, our algorithms can be easily extended to problems with constrained feasible sets, and we demonstrate such possibility on convex problems with convex or (nested) composite objective function over a constrained convex feasible set. The contributions of this paper is described in detail below.

First, for unconstrained and convex problems, we develop an accumulative regularization algorithm for gradient minimization,
obtained by introducing regularization in an accumulative fashion
into the classic proximal-point method. We show that this algorithm can compute an approximate solution $\hat x$ such that $\|\nabla f(\hat x)\|\le \varepsilon$ within at most $\cO(1)\sqrt{L\|x_0 - x^*\|/\varepsilon}$ gradient evaluations. While such optimal complexity has been achieved before in \cite{kim2021optimizing,nesterov2021primal}, our proximal point type method appears to be simpler and more intuitive. In contrast to the previous work
that requires a two-step implementation using two separate algorithms, our proposed algorithm is guided by a single accumulative regularization strategy throughout all the iterations. 
\cmo{ Specifically, our regularization parameter depends on the accuracy threshold $\varepsilon$ at the beginning and increases exponentially as the algorithm proceeds forward.
}
It is worth noting that a regularization strategy similar to ours has been studied before in \cite{allen2018make}. However, the algorithm in \cite{allen2018make} achieves only a suboptimal gradient complexity bound that involves an extra multiplicative logarithmic factor.

Second, for constrained and convex problems, we show that our accumulative regularization algorithm can  compute an approximate solution $\hat x$ whose project gradient norm does not exceed  $\varepsilon$ with the same complexity bound as for the unconstrained case. We also show that our algorithm can be further adapted to problems with composite and/or nested composite objective function. 

Third, we design a parameter-free algorithm for computing an approximate solution with small gradient for unconstrained convex optimization with the same optimal gradient evaluation complexity. To the best of knowledge, no parameter-free algorithm has yet appeared in the literature of unconstrained gradient minimization.

Fourth, for $\mu$-strongly convex problems, we
show that by properly restarting the accumulative regularization algorithm method, one can compute an approximate solution $\hat x$
s.t. $\|\nabla f(\hat x)\| \le \varepsilon$ within at most 
$\cO(1)\sqrt{L/\mu}\log(\|\nabla f(x_0)\|/\epsilon)$ gradient evaluations. This bound removes an extra logarithmic dependence on
the condition number from existing results and matches the lower bound in the literature \cite{nemirovski1983problem,nemirovski1992information,carmon2020lower}.
Moreover, we
develop a parameter-free strongly convex accumulative regularization (SCAR) algorithm that maintains the same gradient complexity without the input of any problem parameters. To the best of our knowledge, SCAR is the first parameter-free algorithm with optimal complexity for minimizing strongly convex functions. 

Fifth, for nonconvex problems with $L$-smooth and $l$-lower curvature objective function, we develop a nonconvex acceleration method by calling the SCAR algorithm iteratively, and show that this method is able to compute an approximate stationary point $\hat x$ s.t. $\|\nabla f(\hat x)\| \le \epsilon$ within at most $\cO(1)\sqrt{Ll}(f(x_0) - f(x^*))/\varepsilon^2$ gradient evaluations. This is the first time such complexity is achieved in the literature for nonconvex problems. 

Sixth, we develop a variant of the aforementioned SCAR algorithm, namely SCAR-PM, that can handle plausible strong convexity information and even detect the existence of nonconvexity. By calling SCAR-PM iteratively, we propose a parameter-free nonconvex acceleration through strongly convex accumulative regularization (NASCAR) algorithm that can maintain the same gradient evaluation complexity without any problem parameter information. To the best of our knowledge, NASCAR is the first parameter-free algorithm with $\cO(1)\sqrt{Ll}(f(x_0) - f(x^*))/\varepsilon^2$ gradient complexity for solving nonconvex problems.

\subsection{Organization of the paper}
We start with the discussion on gradient minimization for unconstrained convex problems in Section~\ref{sec:unconstrained}. In this section, we describe the accumulative regularization strategy and perform the complexity analysis under the assumption that problem parameters like the Lipschitz constant $L$ and distance to optimal solutions are known. In Section~\ref{sec:constrained}, we show that our proposed accumulative regularization strategy can be extended to solve problems with constrained feasible sets, composite and/or nested optimization structures. In Section~\ref{sec:pf}, we propose a parameter-free accumulative regularization (AR) algorithm that can be called to solve unconstrained convex problems without the knowledge of any problem parameter. In Section~\ref{sec:sc}, we show that by restarting the AR algorithms, we can design a parameter-free strongly convex accumulative regularization (SCAR) algorithm that solves the strongly convex and unconstrained problems with optimal complexity. In Section~\ref{sec:NCAR}, we show that the SCAR algorithm can be called iteratively to solve nonconvex problems with best known so-far complexity bounds, and in Section~\ref{sec:NCAR_PF} we propose a parameter-free nonconvex acceleration through strongly convex accumulative regularization (NASCAR) algorithm to achieve the same gradient complexity. Some concluding remarks are made in Section~\ref{sec:conclusion}.

\section{Minimizing gradient for unconstrained convex problems}
\label{sec:unconstrained}
We start our discussion with unconstrained convex optimization problem
$
\min_{x\in\R^n}f(x)
$
with $f$ being $L$-smooth. 
Our goal is to compute an $\varepsilon$-approximate solution $x$ satisfying $\|\nabla f(x)\|\le\varepsilon$. In this section, we will propose an algorithm for computing such an approximate solution that requires at most $\cO(\sqrt{L\|x_0 - x^*\|/\varepsilon})$ gradient evaluations. 

\subsection{An accumulative regularization method}
\label{sec:proposed_unconstrained}
The design of our algorithm relies on
the so-called \emph{\mt}, where 
a search point $x_s$ is generated as an approximate solution to 
the proximal mapping
$
\min_{x\in\R^n}f(x) + ({\sigma_s}/{2})\|x - \xu_s\|^2.
$
Comparing with the original problem, the above proximal mapping  has an additional regularization term that enforces our solution to be in close proximity to a point $\xu_s$. Throughout this paper, we will call $\sigma_s$ the \emph{regularization parameter} and $\xu_s$ the \emph{prox-center}.
Observe that our algorithm is closely related to the classic proximal point algorithm, where the prox-center 
is usually set to $x_{s-1}$, and the regularization parameter is often a constant.
We use the terminology ``accumulative'' to indicate that the prox-center $\xu_s$ is a convex combination of previous approximate solutions $x_0, \ldots, x_{s-1}$ and that the regularization parameter $\sigma_s$ is increasing. Our accumulative choice of the prox-center $\xu_s$ shares some foundamental principles with the second proximal point algorithm proposed in \cite{guler1992new}, in the sense that it is a linear combination of previous search points $x_s$. The difference is that we are using a convex combination and our result is for gradient minimization, while \cite{guler1992new} focuses on objective value minimization.
Our proposed algorithm is formally described in Algorithm~\ref{alg:proposed_unconstrained}.

\begin{algorithm}[h]
	\caption{\label{alg:proposed_unconstrained} An accumulative regularization method for gradient minimization}
	\begin{algorithmic}
		\Require Total number of subproblems $S$, strictly increasing regularization parameters $\{\sigma_s\}_{s=0}^{S}$ with $\sigma_0=0$, and initial point $x_0\in\R^n$. \State Set initial prox-center to $\xu_0 := x_0$.
		\For {$s=1,\ldots, S$}
		\State Set
		\begin{align}
			\label{eq:xus_unconstrained}
			\xu_s = (1-\gamma_s) \xu_{s-1} + \gamma_s x_{s-1}\text{ with }\gamma_s =1 - \sigma_{s-1}/\sigma_s.
		\end{align}
		\State Compute an approximate solution $x_s$ of the proximal mapping subproblem
		\begin{align}
			\label{eq:subproblem_unconstrained}
			x_s^*:=\argmin_{x\in \R^n} \{f_s(x):=f(x) + \frac{\sigma_s}{2}\|x - \xu_s\|^2\}
		\end{align}
		by running subroutine $\cA(f, \sigma_s, \xu_s, x_{s-1}, N_s)$ with $N_s$ gradient evaluations of $\nabla f$ and initial point $x_{s-1}$.
		\EndFor
		\State Output $x_S$.
	\end{algorithmic}
\end{algorithm}

A few remarks are in place for our proposed accumulative regularization algorithm in Algorithm~\ref{alg:proposed_unconstrained}. 
First, at the $s$-th iteration we will call a subroutine $\cA$ to minimize a smooth and strongly convex function $f_s$, starting from initial point $x_{s-1}$. 
Second, by definition \eqref{eq:xus_unconstrained}, the prox-center $\xu_s$ is a convex combination of previous approximate solutions $x_0,\ldots,x_{s-1}$. 
Third, while linear convergence is usually expected for solving smooth and strongly convex problems, an algorithm $\cA$ with a sublinear rate of convergence is sufficient for our analysis in the sequel. Indeed, throughout our analysis we will make the following assumption regarding the convergence of algorithm $\cA$.

\vgap

\begin{assumption}
	\label{assum:A_unconstrained}
	Subroutine $\cA$ used to compute $x_s =\cA(f, \sigma_s, \xu_s, x_{s-1}, N_s)$ exhibits the following performance guarantees after $N_s$ gradient evaluations of $\nabla f$:
	\begin{align}
		\label{eq:A_cond_unconstrained}
		f_s(x_s) - f_s(x_s^*) \le ({c_{\cA}L}/{N_s^2})\|x_{s-1} - x_s^*\|^2.
	\end{align} 
	Here $c_\cA$ is a universal constant that depends on subroutine $\cA$. 
\end{assumption}

\vgap

The above assumption on sublinear convergence of $\cA$ is satisfied by many algorithms, e.g., the accelerated gradient method \cite{nesterov2018lectures} with $c_\cA = 2$.

\subsection{Convergence analysis}
We outline some basic ideas of our convergence analysis before diving into technical details. 
The main concept behind our analysis of Algorithm~\ref{alg:proposed_unconstrained} is to
bound the gradient norm $\|\nabla f(x_s)\|$
by 
using the distances between $x_i$ and $x_i^*$, $i = 0, \ldots, s$. Then we will study the convergence properties  
in terms of these distances thanks to the strong convexity and accumulative regularization terms in \eqref{eq:subproblem_unconstrained}.
Specifically, 
noting the $L$-Lipschitz continuity of $\nabla f(x)$ and using the optimality condition $\nabla f_s(x_s^*)=\nabla f(x_s^*) + \sigma_s(x_s^* - \xu_s) = 0$ at $x_s^*$,
we have
\begin{align}
	\label{eq:xs_norm_bound_unconstrained}
	\begin{aligned}
		& \|\nabla f(x_s)\| = \|\nabla f(x_s) - \nabla f(x_s^*) - \sigma_s(x_s^* - \xu_s)\| 
		\\
		\le \ & \|\nabla f(x_s) - \nabla f(x_s^*)\| + \sigma_s\|x_s^* - \xu_s\| 
		\\
		\le\ &  L\|x_s - x_s^*\| + \sigma_s\|x_s^* - \xu_s\|.
	\end{aligned}
\end{align}
We will show later that with the increasing regularization parameter $\sigma_s$ and convex combination design of prox-center in \eqref{eq:xus_unconstrained}, the distance $\sigma_s\|x_s^* - \xu_s\|$ can be further bounded:
\begin{align}
	\label{eq:claim_on_distance_unconstrained}
	\sigma_s\|x_s^* - \xu_s\| \le \sigma_1\|x^* - x_0\| + \sum_{i=2}^s(\sigma_{i-1} + \sigma_i)\|x_{i-1}^* - x_{i-1}\|.
\end{align}
Therefore, $\|\nabla f(x_s)\|$ can be controled by
\begin{align}
	\label{eq:gradS_estimate_unconstrained}
	\|\nabla f(x_s)\| \le\ &  L\|x_s - x_s^*\| + \sigma_1\|x^* - x_0\| + \sum_{i=2}^s(\sigma_{i-1} + \sigma_i)\|x_{i-1}^* - x_{i-1}\|.
\end{align}
Note that the right-hand-side of the above estimate is the summation of terms of form $\|x_i - x_i^*\|$, i.e., the distance from an approximate solution $x_i$ to the optimal solution $x_i^*$ of the $i$-th strongly convex subproblem.
By studying the convergence properties of subroutine $\cA$ for minimizing smooth and strongly convex functions in subproblem \eqref{eq:subproblem_unconstrained}, we will then bound the total number of gradient evaluations needed to reduce the right-hand-side of the above estimate to the accuracy $\varepsilon$. 

We are now ready to describe the details for proving our claim \eqref{eq:claim_on_distance_unconstrained} and estimating the number of gradient evaluations required to solve subproblems \eqref{eq:subproblem_unconstrained}. For convenience, we allow that the definition   \eqref{eq:subproblem_unconstrained} applies for $s=0$, and in such case we have $\sigma_0 = 0$ and hence $x_0^*=x^*$ is an optimal solution to the original problem. The proof of our claim \eqref{eq:claim_on_distance_unconstrained} is described in the following proposition. Note that the result \eqref{eq:dist_xu_xstarsig_unconstrained} below is exactly our claim
\eqref{eq:claim_on_distance_unconstrained} since $\sigma_0 = 0$.

\vgap

\begin{proposition}
	\label{pro:xstar_dist_shrink_unconstrained}
	In  Algorithm~\ref{alg:proposed_unconstrained} we have for all $s\ge 1$ 
	\begin{align}
		\label{eq:dist_subproblem_unconstrained}
		& \|x_{s-1} - x_s^*\|\le \|x_{s-1} - x_{s-1}^*\|, 
		\\
		\label{eq:dist_xu_xstarsig_unconstrained}
		&\sigma_s\|\xu_s - x_s^*\| 
		\le \sum_{i=1}^{s}(\sigma_{i-1} + \sigma_{i})\|x_{i-1}^* - x_{i-1}\|
		.
	\end{align}
\end{proposition}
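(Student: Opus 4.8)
The key observation is a telescoping identity for the prox-centers: multiplying the update \cref{eq:xus_unconstrained} by $\sigma_s$ and using $1-\gamma_s=\sigma_{s-1}/\sigma_s$ gives
\[
\sigma_s\xu_s=\sigma_{s-1}\xu_{s-1}+(\sigma_s-\sigma_{s-1})x_{s-1}\qquad\text{for all }s\ge 1 ,
\]
which (unrolled with $\sigma_0=0$) also reproves that $\xu_s$ is a convex combination of $x_0,\dots,x_{s-1}$. The plan is to combine this identity with the first-order optimality conditions $0=\nabla f(x_i^*)+\sigma_i(x_i^*-\xu_i)$ for $i\in\{s-1,s\}$ and with monotonicity of $\nabla f$. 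Here I use the excerpt's convention for $s=1$, namely $\sigma_0=0$, $\xu_0=x_0$, $x_0^*=x^*$, which makes the $i=0$ optimality condition read $\nabla f(x^*)=0$ and makes both estimates reduce at $s=1$ to $\|x_0-x_1^*\|\le\|x_0-x^*\|$. I would prove \cref{eq:dist_subproblem_unconstrained} first, since the bound on $\sigma_s\|\xu_s-x_s^*\|$ uses it.

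For \cref{eq:dist_subproblem_unconstrained}: subtract the $i=s-1$ and $i=s$ optimality conditions and eliminate $\xu_{s-1},\xu_s$ using the telescoping identity; this rearranges to
\[
\nabla f(x_s^*)-\nabla f(x_{s-1}^*)=(\sigma_s-\sigma_{s-1})(x_{s-1}-x_s^*)+\sigma_{s-1}(x_{s-1}^*-x_s^*).
\]
Pairing both sides with $x_s^*-x_{s-1}^*$ and using $\langle\nabla f(x_s^*)-\nabla f(x_{s-1}^*),x_s^*-x_{s-1}^*\rangle\ge 0$ (convexity of $f$) forces $(\sigma_s-\sigma_{s-1})\langle x_{s-1}-x_s^*,x_s^*-x_{s-1}^*\rangle\ge\sigma_{s-1}\|x_s^*-x_{s-1}^*\|^2\ge 0$, hence $\langle x_{s-1}-x_s^*,x_s^*-x_{s-1}^*\rangle\ge 0$ since $\sigma_s>\sigma_{s-1}$. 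Expanding $\|x_{s-1}-x_{s-1}^*\|^2$ by inserting $x_s^*$ then gives \cref{eq:dist_subproblem_unconstrained}.

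For \cref{eq:dist_xu_xstarsig_unconstrained}: induct on $s$. Using the telescoping identity again, write
\[
\sigma_s(\xu_s-x_s^*)=\sigma_{s-1}(\xu_{s-1}-x_{s-1}^*)+\sigma_{s-1}(x_{s-1}^*-x_s^*)+(\sigma_s-\sigma_{s-1})(x_{s-1}-x_s^*) ,
\]
take norms, and use \cref{eq:dist_subproblem_unconstrained} to bound $\|x_{s-1}-x_s^*\|\le\|x_{s-1}-x_{s-1}^*\|$ and, via the triangle inequality, $\|x_{s-1}^*-x_s^*\|\le\|x_{s-1}^*-x_{s-1}\|+\|x_{s-1}-x_s^*\|\le 2\|x_{s-1}^*-x_{s-1}\|$. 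This produces the one-step recursion
\[
\sigma_s\|\xu_s-x_s^*\|\le\sigma_{s-1}\|\xu_{s-1}-x_{s-1}^*\|+(\sigma_{s-1}+\sigma_s)\|x_{s-1}^*-x_{s-1}\| ,
\]
and since $\sigma_0=0$ annihilates the first term at $s=1$, unrolling it gives exactly \cref{eq:dist_xu_xstarsig_unconstrained}.

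I expect the only delicate point to be the handling of the ``cross'' term $\|x_{s-1}^*-x_s^*\|$ in the inductive step: it is not one of the subproblem-distance quantities $\|x_i-x_i^*\|$ that are supposed to appear on the right-hand side of \cref{eq:dist_xu_xstarsig_unconstrained}, so it must be absorbed into $2\|x_{s-1}^*-x_{s-1}\|$ through the triangle inequality together with \cref{eq:dist_subproblem_unconstrained}, and this is precisely what forces the coefficient $\sigma_{i-1}+\sigma_i$ rather than a smaller one. Everything else is elementary algebra resting on the prox-center telescoping identity and convexity of $f$; in particular, no property of the inexact subroutine $\cA$ (i.e., \cref{assum:A_unconstrained}) is used in this proposition.
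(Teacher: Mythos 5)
Your proof is correct, and your treatment of \cref{eq:dist_subproblem_unconstrained} is genuinely different from the paper's. The paper reformulates the $s$-th subproblem (using $\gamma_s=1-\sigma_{s-1}/\sigma_s$) as $\min_x f(x) + \tfrac{\sigma_{s-1}}{2}\|x-\xu_{s-1}\|^2 + \tfrac{\sigma_s-\sigma_{s-1}}{2}\|x-x_{s-1}\|^2$, then compares function values at $x_s^*$ and $x_{s-1}^*$ via the two optimality statements for $f_{s-1}$ and the reformulated $f_s$; after canceling and dividing by $\sigma_s-\sigma_{s-1}>0$ this gives $\|x_s^*-x_{s-1}\|^2 \le \|x_{s-1}^*-x_{s-1}\|^2$ directly. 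You instead work at the level of first-order optimality conditions and pair with $x_s^*-x_{s-1}^*$, invoking monotonicity of $\nabla f$ to extract the sign condition $\langle x_{s-1}-x_s^*,\,x_s^*-x_{s-1}^*\rangle\ge 0$ and then expand $\|x_{s-1}-x_{s-1}^*\|^2$. Both are valid; a practical advantage of the paper's function-value argument is that it uses no gradient at all and therefore transfers verbatim to the constrained/composite setting of \cref{pro:xstar_dist_shrink}, where the subdifferential optimality conditions (and hence your pairing step) would need a variational-inequality reformulation. For \cref{eq:dist_xu_xstarsig_unconstrained} the two proofs are essentially the same recursion; the paper splits $\sigma_s(\xu_s-x_s^*)$ as $\sigma_{s-1}(\xu_{s-1}-x_{s-1}^*)+\sigma_{s-1}(x_{s-1}^*-x_{s-1})+\sigma_s(x_{s-1}-x_s^*)$ and applies $\|x_{s-1}-x_s^*\|\le\|x_{s-1}-x_{s-1}^*\|$ to only the last term, which avoids the triangle-inequality detour through the cross term $\|x_{s-1}^*-x_s^*\|$ and arrives at the coefficient $\sigma_{s-1}+\sigma_s$ a bit more directly. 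Your decomposition is algebraically equivalent and yields the same bound.
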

\begin{proof}
	With the selection of $\gamma_s = 1 - \sigma_{s-1}/\sigma_s$ in the definition of prox-center $\xu_s$ in   \eqref{eq:xus_unconstrained}, the subproblem \eqref{eq:subproblem_unconstrained} is equivalent to
	\begin{align*}
		x_s^* = \argmin_{x\in \R^n} f(x) + \dfrac{\sigma_{s-1}}{2}\|x - \xu_{s-1}\|^2 + \dfrac{\sigma_s - \sigma_{s-1}}{2}\|x - x_{s-1}\|^2,\ \forall s \ge 1.
	\end{align*}
	Noting the optimality of $x_{s-1}^*$ in \eqref{eq:subproblem_unconstrained}  and $x_s^*$ in the above subproblem we have
	\begin{align*}
		& f(x_{s-1}^*) + \dfrac{\sigma_{s-1}}{2}\|x_{s-1}^* - \xu_{s-1}\|^2 + \dfrac{\sigma_s - \sigma_{s-1}}{2}\|x_s^* - x_{s-1}\|^2
		\\
		\le\ & f(x_s^*) + \dfrac{\sigma_{s-1}}{2}\|x_s^* - \xu_{s-1}\|^2 + \dfrac{\sigma_s - \sigma_{s-1}}{2}\|x_s^* - x_{s-1}\|^2 
		\\
		\le\ & f(x_{s-1}^*) + \dfrac{\sigma_{s-1}}{2}\|x_{s-1}^* - \xu_{s-1}\|^2 + \dfrac{\sigma_s - \sigma_{s-1}}{2}\|x_{s-1}^* - x_{s-1}\|^2.
	\end{align*}
	The conclusion in \eqref{eq:dist_subproblem_unconstrained} then follows from the above relation and the assumption $\sigma_s>\sigma_{s-1}$ required by Algorithm~\ref{alg:proposed_unconstrained}.
	
	We move to prove the next result \eqref{eq:dist_xu_xstarsig_unconstrained}. Denoting $\alpha_s:=\sigma_s - \sigma_{s-1}$ for all $s$ and noting that $\gamma_s = \alpha_s/\sigma_s$ in \eqref{eq:xus_unconstrained}, we can rewrite the definition of $\xu_s$ in \eqref{eq:xus_unconstrained} to
	$
	\sigma_s \xu_s = \sigma_{s-1}\xu_{s-1} + \alpha_s x_{s-1}.
	$
	Recalling that $\sigma_0=0$, the above recursive relation yields
	$
	\sigma_s\xu_s = \sum_{i=1}^{s}\alpha_ix_{i-1}.
	$
	In other words, $\xu_s$ is a convex combination of $x_0,\ldots, x_{s-1}$ with weights $\alpha_i/\sigma_s$  since $\sigma_s = \sum_{i=1}^{s}\alpha_i$. Thus 
	\begin{align*}
		\sigma_s(\xu_s - x_s^*) =\ & \sum_{i=1}^{s}\alpha_i(x_{i-1} - x_s^*) 
		\\
		=\ & \alpha_s(x_{s-1} - x_s^*) + \sum_{i=1}^{s-1}\alpha_i(x_{i-1} - x_{s-1}^*)  + \left(\sum_{i=1}^{s-1}\alpha_i\right)(x_{s-1}^* - x_s^*)
		\\
		=\ & \alpha_s(x_{s-1} - x_s^*) + \sigma_{s-1}(\xu_{s-1} - x_{s-1}^*) + \sigma_{s-1}(x_{s-1}^* - x_{s-1}) + \sigma_{s-1}(x_{s-1} - x_s^*)
		\\
		=\ & \sigma_{s-1}(\xu_{s-1} - x_{s-1}^*) + \sigma_{s-1}(x_{s-1}^* - x_{s-1}) + \sigma_s(x_{s-1} - x_s^*).
	\end{align*}
	The above recursive relation yields
	$
	\sigma_s(\xu_s - x_s^*) = \sum_{i=1}^{s}[\sigma_{i-1}(x_{i-1}^* - x_{i-1}) + \sigma_i(x_{i-1} - x_i^*)].
	$
	By the above equality and our previous result \eqref{eq:dist_subproblem_unconstrained}, we conclude \eqref{eq:dist_xu_xstarsig_unconstrained}.
\qedsymbol\end{proof}
\vgap

Since the above proposition shows the claim in \eqref{eq:claim_on_distance_unconstrained} holds, we are now ready to estimate the gradient norm $\|\nabla f(x_s)\|$ based on our previous result in \eqref{eq:gradS_estimate_unconstrained}. 
We complete our analysis of Algorithm~\ref{alg:proposed_unconstrained} in the theorem below. 

\vgap
\begin{theorem}
	\label{thm:conv_proposed_unconstrained}
	Assume that the target accuracy $\varepsilon \le LD$, where $D$ is an upper bound on the distance to the set of optimal solutions such that $D\ge \operatorname{dist}(x_0,X^*):=\min_{x^*\in X^*}\|x_0 - x^*\|$. 	
	In Algorithm~\ref{alg:proposed_unconstrained}, suppose that the parameters are set to
	\begin{align}
		\label{eq:cond_proposed_unconstrained}
		S = 1 + \left\lceil\log_{4}({LD}/{\varepsilon})\right\rceil, \ \sigma_s = 4^{s-2}{\varepsilon}/{D},\ \gamma_s = {3}/{4},\ N_s = \left\lceil 8\sqrt{2c_\cA L/\sigma_s}\right\rceil.
	\end{align}
	If the subroutine $\cA$ for solving subproblems $x_s = \cA(f, \sigma_s, x_{s-1}, N_s)$ satisfies the sublinear convergence property \eqref{eq:A_cond_unconstrained} in Assumption~\ref{assum:A_unconstrained}, Algorithm~\ref{alg:proposed_unconstrained} can compute an approximate solution $x_S$ with $\|\nabla f(x_S)\|\le\varepsilon$ after at most 
	$2(1 + 8\sqrt{2c_\cA})\sqrt{{LD}/{\varepsilon}}$ gradient evaluations of $\nabla f$.
\end{theorem}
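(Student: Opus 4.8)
The plan is to combine the gradient-norm estimate \cref{eq:gradS_estimate_unconstrained} (which is valid thanks to \cref{pro:xstar_dist_shrink_unconstrained}) with the sublinear convergence property \cref{eq:A_cond_unconstrained} of the subroutine $\cA$, and then verify that the specific parameter choices in \cref{eq:cond_proposed_unconstrained} make every term on the right-hand side of \cref{eq:gradS_estimate_unconstrained} at most of order $\varepsilon$. First I would record the consequences of the parameter choices: since $\gamma_s\equiv 3/4$ forces $\sigma_{s-1}/\sigma_s = 1/4$, the geometric sequence $\sigma_s = 4^{s-2}\varepsilon/D$ is consistent with $\sigma_0=0$ being replaced by the convention that the recursion starts at $\sigma_1 = \varepsilon/(4D)$ (strictly, one treats the $s=1$ term separately, as the statement of \cref{eq:claim_on_distance_unconstrained} already does). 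In particular $\sigma_{i-1}+\sigma_i = \tfrac{5}{4}\sigma_i$, and $\sigma_S = 4^{S-2}\varepsilon/D \ge LD/(4D)\cdot\text{(something)}$ — more precisely $S = 1+\lceil\log_4(LD/\varepsilon)\rceil$ gives $4^{S-1}\ge LD/\varepsilon$, hence $\sigma_S \ge L/4$; this is the key inequality controlling the first term $L\|x_S - x_S^*\|$.

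Next I would convert \cref{eq:A_cond_unconstrained} into a bound on the distances $\|x_i - x_i^*\|$. Since $f_s$ is $\sigma_s$-strongly convex, $f_s(x_s) - f_s(x_s^*) \ge \tfrac{\sigma_s}{2}\|x_s - x_s^*\|^2$, so \cref{eq:A_cond_unconstrained} yields
\begin{align*}
\|x_s - x_s^*\|^2 \le \frac{2c_\cA L}{\sigma_s N_s^2}\|x_{s-1} - x_s^*\|^2 \le \frac{2c_\cA L}{\sigma_s N_s^2}\|x_{s-1} - x_{s-1}^*\|^2,
\end{align*}
where the last step uses \cref{eq:dist_subproblem_unconstrained}. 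With $N_s = \lceil 8\sqrt{2c_\cA L}/\sigma_s\rceil \ge 8\sqrt{2c_\cA L}/\sigma_s$, the factor $2c_\cA L/(\sigma_s N_s^2) \le \sigma_s/(64 L)$... (carefully: $2c_\cA L/(\sigma_s N_s^2) \le 2c_\cA L \sigma_s/(64^2 c_\cA L^2 \cdot 2) \cdot \sigma_s$ — I would verify the exact constant, but the point is it is $\le (\sigma_s/(8L))^2\cdot$const). Thus $\|x_s - x_s^*\| \le \tfrac{1}{8}(\sigma_s/L)^{1/2}\cdot(\text{const})\cdot\|x_{s-1}-x_{s-1}^*\|$, and since $\sigma_s/L \le 1/4$ for $s\le S$ (as $\sigma_S\le L$ up to the ceiling, which I'd check), each step contracts the distance by a fixed factor smaller than $1$. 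Starting from $\|x_0 - x_0^*\| = \operatorname{dist}(x_0,X^*)\le D$, I would derive $\|x_i - x_i^*\| \le \rho^i D$ for an explicit $\rho < 1$, or more usefully a bound of the form $\sigma_i\|x_{i-1}-x_{i-1}^*\| \le c\,\varepsilon\, 4^{-(S-i)}$ so that the geometric sum $\sum_i (\sigma_{i-1}+\sigma_i)\|x_{i-1}^*-x_{i-1}\|$ telescopes to $O(\varepsilon)$; likewise $L\|x_S - x_S^*\|$ and $\sigma_1\|x^*-x_0\| = \varepsilon\|x^*-x_0\|/(4D)\le\varepsilon/4$ are each $O(\varepsilon)$. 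Summing the contributions and tracking constants gives $\|\nabla f(x_S)\|\le\varepsilon$.

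Finally, for the complexity count I would sum $\sum_{s=1}^S N_s \le \sum_{s=1}^S (1 + 8\sqrt{2c_\cA L}/\sigma_s)$. The first part contributes $S = O(\log(LD/\varepsilon))$, which is lower order. The second part is $8\sqrt{2c_\cA L}\sum_{s=1}^S 1/\sigma_s = 8\sqrt{2c_\cA L}\cdot(D/\varepsilon)\sum_{s=1}^S 4^{2-s}$; since $\sigma_s$ grows geometrically, $\sum_s 1/\sigma_s$ is dominated by its largest term $1/\sigma_1 = 4D/\varepsilon$ times a geometric factor, giving $\sum_s 1/\sigma_s \le (16/3)(D/\varepsilon)$ or similar, hence $\sum_s N_s \le S + O(1)\sqrt{L}\cdot\sqrt{D/\varepsilon}\cdot\sqrt{D} \cdot (1/\sqrt{\cdot})$ — I would be careful here: the bound $\sqrt{2c_\cA L}\cdot D/\varepsilon$ needs to become $\sqrt{LD/\varepsilon}$, which works precisely because the dominant term is $1/\sigma_1$ and $\sigma_1 = \varepsilon/(4D)$, so $\sqrt{L}/\sigma_1 \cdot(\text{geometric sum}) = \sqrt{L}\cdot 4D/\varepsilon\cdot O(1)$, and one uses $\sqrt{L}\cdot D/\varepsilon = \sqrt{LD/\varepsilon}\cdot\sqrt{D/(L^{-1}\varepsilon)}$... actually the clean way: $\sum_{s=1}^S 4^{2-s}$ is a constant ($\le 16/3$), wait that is wrong since $\sigma_s^{-1} = 4^{2-s}D/\varepsilon$ decreases, so $\sum \sigma_s^{-1}\le (D/\varepsilon)\sum_{s\ge1}4^{2-s} = (D/\varepsilon)\cdot(16/3)$. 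Hmm, that gives $\sqrt{L}\cdot D/\varepsilon$, not $\sqrt{LD/\varepsilon}$ — so I must have the scaling of $\sigma_s$ backwards relative to $S$; in fact $\varepsilon\le LD$ means $D/\varepsilon \ge 1/L$, and one needs $\sigma_1$ of order $\varepsilon/D$ balanced against $S$ terms, with the true dominant contribution at $s$ near $S$ where $\sigma_s\approx L$... The honest statement: I would recompute which end of the sum dominates, and the $\sqrt{LD/\varepsilon}$ appears because the geometric sum of $1/\sigma_s$ with ratio $4$ is $\Theta(1/\sigma_1) = \Theta(D/\varepsilon)$ while $\sqrt{L}\cdot(D/\varepsilon)$ must be re-expressed using $4^{S}\approx LD/\varepsilon$, i.e. $\sigma_1 = \varepsilon/(4D)$ and $\sigma_S\approx L/4$, so $\sigma_1\sigma_S\approx \varepsilon L/(16D)$ and $1/\sigma_1 = \sqrt{1/(\sigma_1\sigma_S)}\cdot\sqrt{\sigma_S/\sigma_1}$ — I'll leave the bookkeeping, but the mechanism is that $\sqrt{L}/\sigma_1$ combined with $4^S\approx LD/\varepsilon$ telescopes to $O(1)\sqrt{LD/\varepsilon}$. \textbf{The main obstacle} I anticipate is precisely this constant-tracking in the complexity sum: making the geometric series of $1/\sigma_s$ collapse to exactly $2(1+8\sqrt{2c_\cA})\sqrt{LD/\varepsilon}$ requires the choices $\sigma_s = 4^{s-2}\varepsilon/D$ and $S = 1+\lceil\log_4(LD/\varepsilon)\rceil$ to interlock correctly, including the ceiling functions, and simultaneously ensuring the contraction factor in the distance recursion stays below $1$ uniformly in $s$ so that the gradient-norm bound closes at $\varepsilon$ rather than at a constant multiple of it.
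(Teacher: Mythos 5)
Your outline for the gradient-norm part is essentially the paper's: convert the function-gap bound \cref{eq:A_cond_unconstrained} to a distance bound via $\sigma_s$-strong convexity of $f_s$, chain it through \cref{eq:dist_subproblem_unconstrained}, and plug the resulting geometric decay of $\|x_i-x_i^*\|$ into \cref{eq:gradS_estimate_unconstrained}. That part is sound and matches the paper.

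The gap is in your reading of $N_s$, and it is exactly what blows up your complexity count. You took $N_s=\bigl\lceil 8\sqrt{2c_\cA L}\big/\sigma_s\bigr\rceil$ with $\sigma_s$ \emph{outside} the radical, which makes the per-step contraction ratio $s$-dependent (you got $\tfrac{1}{8}\sqrt{\sigma_s/L}$) and makes $\sum_s N_s$ scale like $\sqrt{L}\sum_s\sigma_s^{-1}=\Theta(\sqrt{L}\cdot D/\varepsilon)$, which, as you correctly noticed mid-proof, does not reduce to $\sqrt{LD/\varepsilon}$. The intended (and dimensionally consistent) choice, which is the one the paper's own proof uses, is $N_s=\bigl\lceil 8\sqrt{2c_\cA L/\sigma_s}\bigr\rceil$ with $\sigma_s$ \emph{inside} the radical. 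With that choice one gets a \emph{constant} contraction factor: $N_s^2\ge 128c_\cA L/\sigma_s$ gives $\tfrac{2c_\cA L}{\sigma_s N_s^2}\le\tfrac{1}{64}$, so $\|x_s-x_s^*\|\le\tfrac{1}{8}\|x_{s-1}-x_{s-1}^*\|$ uniformly. The complexity sum then becomes
\begin{align*}
\sum_{s=1}^{S}N_s\le\sum_{s=1}^{S}\Bigl(1+8\sqrt{2c_\cA L/\sigma_s}\Bigr)\le(1+8\sqrt{2c_\cA})\sqrt{L}\sum_{s=1}^{S}\sigma_s^{-1/2},
\end{align*}
where the first bound $1\le\sqrt{L/\sigma_s}$ uses $\sigma_s\le L$, and since $\sigma_s=4\sigma_{s-1}$ the sum $\sum_s\sigma_s^{-1/2}$ is geometric with ratio $\tfrac12$ and is $\Theta(\sigma_1^{-1/2})=\Theta(\sqrt{D/\varepsilon})$, yielding the claimed $O(\sqrt{LD/\varepsilon})$. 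Your long tangent at the end trying to reconcile $\sqrt{L}\cdot D/\varepsilon$ with $\sqrt{LD/\varepsilon}$ cannot succeed because those quantities genuinely differ by a factor $\sqrt{D/\varepsilon}$; the fix is the placement of $\sigma_s$ under the radical, not a rearrangement of the geometric sum.

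One smaller point: the constant contraction factor $\tfrac18$ also simplifies the gradient-norm bound — you do not need the $s$-dependent factor $\sqrt{\sigma_s/L}$ you introduced, and the final estimate closes cleanly as $\|\nabla f(x_S)\|\le (L/2)4^{-S}\|x_0-x^*\|+(9/4)\sigma_1\|x_0-x^*\|\le\varepsilon$ after substituting $S$ and $\sigma_1$ from \cref{eq:cond_proposed_unconstrained}.
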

\begin{proof}
	Noting that function $f_s$ in the subproblem is $\sigma_s$-strongly convex, we have
	$
	f_s(x_s) - f_s(x_s^*) \ge ({\sigma_s}/{2})\|x_s - x_s^*\|^2,
	$
	which, in view of \eqref{eq:A_cond_unconstrained}, implies that
	\begin{align*}
		\|x_s - x_s^*\|\le \dfrac{1}{N_s}\sqrt{\frac{2c_\cA L}{\sigma_s}}\|x_{s-1} - x_{s}^*\|.
	\end{align*}
	Applying $N_s=\lceil8\sqrt{2c_\cA L/\sigma_s}\rceil$ and  noting \eqref{eq:dist_subproblem_unconstrained} in  
	Proposition~\ref{pro:xstar_dist_shrink_unconstrained}, we have
	\begin{align*}
		\|x_s - x_s^*\| \le ({1}/{8})\|x_{s-1} - x_s^*\| \le ({1}/{8})\|x_{s-1} - x_{s-1}^*\|,\ \forall s = 1,\ldots, S.
	\end{align*}
	Using the above result and noting from \eqref{eq:cond_proposed_unconstrained} that $\sigma_S\le L$ and $\sigma_s = 4\sigma_{s-1}$ for all $s>1$, we then conclude from the estimate of $\|\nabla f(x_s)\|$ (with $s=S$) in  \eqref{eq:gradS_estimate_unconstrained} that
	\begin{align*}
		\|\nabla f(x_S)\| \le\ & 8^{-S}L\|x^* - x_0\|  + \sigma_1\|x^* - x_0\| + \sum_{s=2}^{S}5\cdot 4^{s-2}\sigma_1 8^{-s+1}\|x^* - x_0\|
		\\
		\le\ & (L/2)\cdot 4^{-S}\|x_0 - x^*\| + (9/4)\sigma_1\|x_0 - x^*\|.
	\end{align*}
	Here we choose $x^*\in X^*$ to be an optimal solution such that $\|x_0 - x^*\|\le D$.
	Substituting the choices of $S$ and $\sigma_1$ in \eqref{eq:cond_proposed} we have $\|\nabla f(x_S)\|\le \varepsilon$. 
	
	It remains to estimate the total number of gradient evaluations. Noting from \eqref{eq:cond_proposed_unconstrained} that $\sigma_s\le \sigma_S\le L$ for all $s$, we have $N_s\le 1 + 8\sqrt{2c_\cA L/\sigma_s} \le (1 + 8\sqrt{2c_\cA})\sqrt{L/\sigma_s}$. Recalling $\sigma_s=4\sigma_{s-1}$, we can bound the total number of gradient evaluations by
	\begin{align*}
		\sum_{s=1}^{S}N_s \le & (1 + 8\sqrt{2c_\cA})\sqrt{L}\sum_{s=1}^{S}\dfrac{1}{\sqrt{\sigma_s}} \le (1 + 8\sqrt{2c_\cA})\sqrt{\dfrac{L}{\sigma_1}}\sum_{s=1}^{\infty}2^{-(s-1)} 
		\\
		\le & 2(1 + 8\sqrt{2c_\cA})\sqrt{\dfrac{LD}{\varepsilon}}.
	\end{align*}
\qedsymbol\end{proof}
\vgap

A few remarks are in order for the above result. 
First, the regularization parameter $\sigma_s$ is increasing exponentially and the number of gradient evaluations $N_s$ in each iteration of Algorithm~\ref{alg:proposed_unconstrained} is decreasing exponentially. The algorithm will terminate when the regularization parameter $\sigma_s$ reaches $L$. 
Other choices of parameters are also possible as long as we have exponential increment of $\sigma_s$ and decrement of $N_s$. For example, we may also set $\sigma_s = 2^{s-2}\varepsilon/D$ and $\gamma_s = 1/2$ to attain similar gradient evaluation complexity.
Second, we assume $\varepsilon \le LD$ without loss of generality. Indeed, if $\varepsilon>LD$,  the initial value $x_0$ is already an approximate solution since $\|\nabla f(x_0)\|\le L\|x_0 - x^*\|\le LD<\varepsilon$.
Finally, the above result requires the input of two constants, i.e., the Lipschitz constant $L$ and an upper bound $D$ of the distance $\|x_0-x^*\|$. In Section~\ref{sec:pf}, we will describe how to develop parameter-free algorithms that does not require the knowledge of $L$ or $D$.

\section{Minimizing projected gradient over a simple feasible set}
\label{sec:constrained}
In this section, we show that the accumulative regularization technique we introduce in the previous section can be adapted to problems with constrained feasible sets, and (nested) composite objective function. We consider the problem with simple feasible set $X$ and composite function $\phi$:
\begin{align}
	\label{eq:problem} \min_{x\in X}\{ F(x):=f(x) + \phi(x)\}.
\end{align}
Here $X \subset \R^n$ is a closed convex set and $f$ and $\phi$ are closed convex functions. We still assume that $\nabla f$ is $L$-smooth. In addition, we assume that $X$ and $\phi$ are  relatively simple such that for any $\sigma, \eta\ge 0$ and any $x,\xu\in\R^n$, the following optimization problem can be computed efficiently:
\begin{align}
	\label{eq:xpp}
	x^{++}(\eta, \sigma, \xu; x):=\argmin_{u\in X}\langle \nabla f(x), u\rangle + \phi(u) + \frac{\sigma}{2}\|u - \xu\|^2 + \frac{\eta}{2}\|u - x\|^2.
\end{align}
Throughout this paper we assume that an optimal solution $x^*$ to problem \eqref{eq:problem} exists. 
Clearly, $x^*$ is an optimal solution of \eqref{eq:problem} if and only if the \emph{projected gradient} $G_\eta(x^*) = 0$ for some $\eta>0$, where
\begin{align}
	\label{eq:G}
	G_{\eta}(x) :=\ & \eta(x - x^+(\eta;x))\text{ and }
	\\
	\label{eq:xp}
	x^+(\eta;x):=\ &x^{++}(\eta, 0, x;x) = \argmin_{u\in X}\langle\nabla f(x), u\rangle + \phi(u) + \frac{\eta}{2}\|u - x\|^2\ \forall x\in \R^n.
\end{align}
Here $x^+(\eta;x)$ is usually called the \emph{gradient mapping} in the literature. Throughout this section, we may simply use $x^+$ to denote the gradient mapping when the value of $\eta$ is evident from the context. Moreover, when $X=\R^n$ and $\phi\equiv 0$, we can observe that the projected gradient is exactly the gradient: $G_\eta(x) = \nabla f(x)$. Consequently, while small gradient in terms of $\|\nabla f(x)\|$ is no longer an appropriate measure of optimality for the constrained problem \eqref{eq:problem}, it is natural to look for approximate solutions with small projected gradient in terms of $\|G_\eta(x)\|$. Our goal in this section is to design a first-order algorithm that computes an approximate solution $x$ such that $\|G_\eta(x)\|\le \varepsilon$ for some $\eta>0$, given any accuracy threshold $\varepsilon>0$.
The study on unconstrained problem in the previous section is a special case of our study in this section.

It is worth mentioning a few alternative 
accuracy
measures for constrained problems.
For simplicity, suppose for now that $\phi =0$.
If $X$ is bounded, a well-known accuracy measure is given by
$\max_{u \in X} \langle \nabla f(x), x - u\rangle$ ,
which is often called Wolfe gap in some recent literature.
It tells how much a linear approximation of $f$ can decrease over
$X$ starting from $x$. When $X$ is unbounded, one can possibly 
generalize Wolfe gap to a normalized Wolfe gap:
$\max_{u \in X, \|u-x\| \le 1} \langle \nabla f(x), x - u\rangle$.
By examining the optimality conditions of \eqref{eq:xp},
we can see that small projected gradients often implies
small (normalized) Wolfe gap as long as $x$ is bounded (see \cite{lan2020first}). Therefore, we choose to use
projected gradient as an accuracy measure in this section.

Comparing the definition of gradient mapping $x^+$ in \eqref{eq:xp} with that of $x^{++}$ in \eqref{eq:xpp}, we can observe an extra term $(\sigma/2)\|u - \xu\|^2$ in \eqref{eq:xpp}. 
In some sense,
the point $x^{++}(\eta, \sigma, \xu; x)$ in \eqref{eq:xpp} can be viewed as the gradient mapping of a perturbed modification of problem  \eqref{eq:problem}, in which the function $\phi(\cdot)$ is replaced by $\phi(\cdot) + (\sigma/2)\|\cdot - \xu\|^2$. 
Throughout this paper we call $x^{++}(\eta, \sigma, \xu; x)$ the \emph{perturbed gradient mapping} with respect to the \emph{regularization parameter} $\sigma$ and  \emph{prox-center} $\xu$. We may use $x^{++}$ when $\eta,\sigma$ and $\xu$ are evident in the context.

Similar to the previous section, the basic idea for the design of algorithm for the constrained setting is to bound the norm of projected gradient by distances $x_i - x_i^*$, $i=0, \ldots, s$. The overarching picture of our analysis is analogous to the discussion around	\eqref{eq:xs_norm_bound_unconstrained}--\eqref{eq:gradS_estimate_unconstrained} in the previous section, with gradients replaced by projected gradients. Indeed, in Proposition~\ref{pro:grad_dist_relation} we will show a result that is analogous to the gradient-norm-to-distance relationship in \eqref{eq:xs_norm_bound_unconstrained}. The proposition is the consequence of a few properties of (perturbed) gradient mappings in the following lemma.

\vgap

\begin{lem}
	\label{lem:grad_map_properties}
	For any $x, \xu\in\R^n$ and $\eta>0$, $\sigma\ge0$, we have the following properties for the (perturbed) gradient mappings:
	\begin{align}
		\label{eq:xp_lip}
		& \|x^+(\eta+\sigma; x) - x^{++}(\eta, \sigma, \xu; x)\| \le \frac{\sigma}{\eta+\sigma}\|x - \xu\|;
		\\
		\label{eq:xp_monotonicity}
		& \|G_\eta(x)\| \le \|G_{\sigma + \eta}(x)\|.
	\end{align}
	Moreover, if $x^{++}:=x^{++}(\eta, \sigma, \xu; x)$ satisfies 
	\begin{align}
		\label{eq:local_lip}
		f(x^{++}) - f(x) - \langle \nabla f(x), x^{++} - x\rangle \le \frac{M}{2}\|x^{++} - x\|^2
	\end{align}
	for some $M>0$,  we have
	\begin{align}
		\label{eq:3point}
		\begin{aligned}
			&  \left[f(x^{++}) + \phi(x^{++}) + \frac{\sigma}{2}\|x^{++} - \xu\|^2\right] - \left[f(u) + \phi(u) + \frac{\sigma}{2}\|u - \xu\|^2\right]
			\\
			\le\ & \frac{\eta}{2}\|u - x\|^2 - \frac{\sigma + \eta}{2}\|u - x^{++}\|^2 - \frac{\eta-M}{2}\|x^{++} - x\|^2 ,\ \forall u\in X.
		\end{aligned}
	\end{align}
\end{lem}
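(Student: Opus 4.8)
The plan is to handle the three assertions separately, with the first two following from standard firm-nonexpansiveness / monotonicity properties of proximal mappings over a convex set, and the third being a three-point inequality in the spirit of the classical estimate for the prox-gradient step.

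For \cref{eq:xp_lip}: both $x^+(\eta+\sigma;x)$ and $x^{++}(\eta,\sigma,\xu;x)$ are minimizers of strongly convex problems over $X$ that differ only in their quadratic regularizers. Write out the first-order optimality conditions for both points in variational-inequality form, namely $\langle \nabla f(x) + \partial\phi(\cdot) + (\eta+\sigma)(\cdot - x), u - \cdot\rangle \ge 0$ for $x^+$ and $\langle \nabla f(x) + \partial\phi(\cdot) + \eta(\cdot - x) + \sigma(\cdot - \xu), u - \cdot\rangle \ge 0$ for $x^{++}$, each holding for all $u\in X$. Test the first with $u = x^{++}$ and the second with $u = x^+$, add the two inequalities, and use monotonicity of $\partial\phi$ to cancel the subgradient terms. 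This leaves a relation of the form $(\eta+\sigma)\|x^+ - x^{++}\|^2 \le \sigma\langle \xu - x, x^{++} - x^+\rangle$ (after rearranging the linear and quadratic pieces), and Cauchy–Schwarz then yields $(\eta+\sigma)\|x^+ - x^{++}\| \le \sigma\|x - \xu\|$, which is \cref{eq:xp_lip}. For \cref{eq:xp_monotonicity}: by definition $G_\eta(x) = \eta(x - x^+(\eta;x))$ and $G_{\sigma+\eta}(x) = (\sigma+\eta)(x - x^+(\sigma+\eta;x))$; apply \cref{eq:xp_lip} with $\xu = x$ (so that $x^{++}(\eta,\sigma,x;x) = x^+(\eta+\sigma;x)$ trivially, and compare instead $x^+(\eta;x)$ with $x^+(\eta+\sigma;x)$) — more directly, the map $t\mapsto \|t(x - x^+(t;x))\|$ is nondecreasing in $t$ for prox-type mappings; I would prove this by writing the optimality conditions for $x^+(\eta;x)$ and $x^+(\eta+\sigma;x)$, subtracting, and deducing both $\|x - x^+(\eta+\sigma;x)\| \le \|x - x^+(\eta;x)\|$ and $\langle x - x^+(\eta;x), x - x^+(\eta+\sigma;x)\rangle \ge \|x - x^+(\eta+\sigma;x)\|^2$, from which $\eta\|x - x^+(\eta;x)\| \le (\eta+\sigma)\|x - x^+(\eta+\sigma;x)\|$ follows.

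For \cref{eq:3point}: start from the optimality condition of $x^{++}$ as the minimizer over $X$ of $h(u) := \langle\nabla f(x),u\rangle + \phi(u) + \tfrac{\sigma}{2}\|u-\xu\|^2 + \tfrac{\eta}{2}\|u-x\|^2$. Since $h$ is $(\sigma+\eta)$-strongly convex, for all $u\in X$ we have $h(x^{++}) \le h(u) - \tfrac{\sigma+\eta}{2}\|u - x^{++}\|^2$. Expanding $h$ and using convexity of $\phi$ at $x^{++}$ versus $u$ appropriately, together with the identity $\tfrac{\eta}{2}\|x^{++}-x\|^2 - \tfrac{\eta}{2}\|u-x\|^2 + \tfrac{\sigma+\eta}{2}\|u-x^{++}\|^2$ being controlled, rearrange to isolate $\phi(x^{++}) + \tfrac{\sigma}{2}\|x^{++}-\xu\|^2 - \phi(u) - \tfrac{\sigma}{2}\|u-\xu\|^2$ on one side. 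Then add and subtract $f(x^{++})$ and $f(u)$, use convexity of $f$ to bound $f(u) \ge f(x) + \langle\nabla f(x), u-x\rangle$, and use the local smoothness hypothesis \cref{eq:local_lip} to bound $f(x^{++}) \le f(x) + \langle\nabla f(x), x^{++}-x\rangle + \tfrac{M}{2}\|x^{++}-x\|^2$. The $\langle\nabla f(x),\cdot\rangle$ terms assemble into exactly the linear term appearing in $h$, and collecting the quadratic terms produces the right-hand side $\tfrac{\eta}{2}\|u-x\|^2 - \tfrac{\sigma+\eta}{2}\|u-x^{++}\|^2 - \tfrac{\eta-M}{2}\|x^{++}-x\|^2$.

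The main obstacle is bookkeeping in \cref{eq:3point}: one must be careful to route the $\langle\nabla f(x), u - x^{++}\rangle$ term correctly through the strong-convexity inequality for $h$ and the convexity inequality for $f$ so that the first-order terms cancel cleanly and only the three named quadratic terms survive; in particular the coefficient $\tfrac{\eta-M}{2}$ on $\|x^{++}-x\|^2$ (rather than $\tfrac{\eta}{2}$) comes precisely from spending $\tfrac{M}{2}\|x^{++}-x\|^2$ of the available $\tfrac{\eta}{2}\|x^{++}-x\|^2$ on the smoothness slack. The other two parts are routine once the optimality conditions are written in variational-inequality form.
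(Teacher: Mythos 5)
Your handling of \cref{eq:xp_lip} and \cref{eq:3point} is correct and follows essentially the same route as the paper: for \cref{eq:xp_lip}, testing the two variational inequalities against each other's minimizers, cancelling the $\phi$ terms by monotonicity, and applying Cauchy--Schwarz is exactly the argument given; for \cref{eq:3point}, your strong-convexity inequality $h(u)\ge h(x^{++})+\tfrac{\sigma+\eta}{2}\|u-x^{++}\|^2$ is equivalent to the paper's three-point rewriting of the optimality condition of $x^{++}$, and your accounting of how $\tfrac{M}{2}\|x^{++}-x\|^2$ is spent out of $\tfrac{\eta}{2}\|x^{++}-x\|^2$ is the right bookkeeping.

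There is, however, a genuine gap in your argument for \cref{eq:xp_monotonicity}. Write $a := x - x^+(\eta;x)$ and $b := x - x^+(\eta+\sigma;x)$. The two facts you propose to extract from the subtracted optimality conditions are $\|b\|\le\|a\|$ and $\langle a,b\rangle\ge\|b\|^2$, and you claim $\eta\|a\|\le(\eta+\sigma)\|b\|$ ``follows.'' It does not: take $b=e_1$ and $a=e_1+Te_2$ with $T$ large; both of your conditions hold, yet $\|a\|$ is unbounded. The problem is that your two inequalities leave the component of $a$ orthogonal to $b$ completely uncontrolled. The fix is to not weaken the relation obtained by adding the two optimality conditions: testing the condition for $x^+(\eta+\sigma;x)$ at $u=x^+(\eta;x)$ and vice versa gives $\eta\|x^+(\eta+\sigma;x)-x^+(\eta;x)\|^2\le\sigma\langle x^+(\eta+\sigma;x)-x,\,x^+(\eta;x)-x^+(\eta+\sigma;x)\rangle$, whence $\eta\|x^+(\eta+\sigma;x)-x^+(\eta;x)\|\le\sigma\|b\|$ by Cauchy--Schwarz, and then the triangle inequality $\eta\|a\|\le\eta\|b\|+\eta\|a-b\|\le(\eta+\sigma)\|b\|$ closes the argument; this is the route the paper takes. (Alternatively, expanding the full quadratic relation in $\|a\|/\|b\|$ also works, but the two corollaries you isolated are strictly weaker than what is needed.)
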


\begin{proof}
	Let us denote $x^+:=x^+(\eta+\sigma; x)$, $x^{++}:=x^{++}(\eta, \sigma, \xu; x)$ and $x' := x^+(\eta; x)$. By the optimality conditions of \eqref{eq:xp} and \eqref{eq:xpp} we have 
	\begin{align}
		\langle \nabla f(x) + (\eta+\sigma)(x^+ - x), u - x^{+}\rangle + \phi(u) - \phi(x^{+}) \ge 0, \label{eq:oc_simple_1} \\
		\langle \nabla f(x) + \sigma (x^{++}-\bar x) +\eta (x^{++} - x), u - x^{++}\rangle + \phi(u) - \phi(x^{++})\ge 0, \label{eq:oc_simple_2} \\
		\langle \nabla f(x) + \eta (x' - x), u - x'\rangle + \phi(u) - \phi(x') \ge 0 \label{eq:oc_simple_3}
	\end{align}
	for any $u \in X$.
	To prove \eqref{eq:xp_lip}, let us set $u = x^{++}$ in \eqref{eq:oc_simple_1}, $u=\xp$ in \eqref{eq:oc_simple_2}, and add the two relations together. It is easy to see that
	$
	\langle (\eta+\sigma) (x^+ - x^{++})
	+ \sigma (\bar x - x), x^{++} - x^+\rangle  \ge 0 
	$, or equivalently,
	$
	\sigma \langle \bar x - x, x^{++} - x^+\rangle \ge (\eta+\sigma) \|x^+ - x^{++}\|^2,
	$
	which implies \eqref{eq:xp_lip} by the Cauchy-Schwarz inequality.
	Now notice that by definition
	the relation \eqref{eq:xp_monotonicity} is equivalent to $\eta\|x - x'\|\le (\sigma + \eta)\|x - x^{+}\|$.
	Let us set $u = x'$ in \eqref{eq:oc_simple_1}, $u=\xp$ in \eqref{eq:oc_simple_3}, and add the two relations together.
	Then we have 
	$\sigma \langle x^+ - x, x' - x^+\rangle
	- \eta \|x^+ - x'\|^2 \ge 0$,
	which implies 
	$\eta \|x^+ - x'\| \le \sigma \|x - x^+ \|$ again by the Cauchy-Schwarz inequality.
	The previous inequality together with the triangular inequality then immediately implies
	\[
	\eta\|x - x'\| 
	\le \eta (\|x - x^+\|
	+\|x^+ - x'\|) \le (\sigma + \eta) \|x - x^+ \|.
	\]
	In order to show \eqref{eq:3point}, we first notice that
	\eqref{eq:oc_simple_2} is equivalent to 
	
	\begin{align*}
		\begin{aligned}
			\langle \nabla f(x), x^{++} - u\rangle + \phi(x^{++}) - \phi(u)
			&\le ({\sigma}/{2})(\|u - \xu\|^2 - \|x^{++} - \xu\|^2 - \|u - x^{++}\|^2) \\
			& + ({\eta}/{2})(\|u - x\|^2 - \|x^{++} - x\|^2 - \|u - x^{++}\|^2)
		\end{aligned}
	\end{align*}	
	for any $u\in X$.
	Moreover, from \eqref{eq:local_lip} and the convexity of $f$ we have
	\begin{align*}
		f(\xpp) &\le f(x) + \langle \nabla f(x), \xpp - x\rangle + ({M}/{2})\|\xpp - x\|^2\\
		&\le f(x) + \langle \nabla f(x), u - x\rangle 
		+ \langle \nabla f(x), \xpp - u\rangle
		+({M}/{2})\|\xpp - x\|^2\\
		&\le f(u) + \langle \nabla f(x), \xpp - u\rangle
		+({M}/{2})\|\xpp - x\|^2.
	\end{align*} 
	Combining the above two relations we conclude \eqref{eq:3point}.
\qedsymbol\end{proof}

\vgap

With the properties of (perturbed) gradient mapping in the previous lemma, we are ready to present the following proposition that links the gradient mapping with the distance to the optimal solution of the perturbed problem.

\vgap

\begin{proposition}
	\label{pro:grad_dist_relation}
	For any regularization parameter $\sigma>0$ and prox-center $\xu\in \R^n$, if the perturbed gradient mapping $x^{++}:=x^{++}(\eta, \sigma, \xu; x)$ satisfies \eqref{eq:local_lip} and $\eta\ge 2M$, we have
	\begin{align}
		\label{eq:grad_dist_relation}
		\|G_\eta(x)\|\le (3\sigma+2\eta)\|x^*(\sigma;\xu) - x\| + \sigma\|x^*(\sigma;\xu) - \xu\|,
	\end{align}
	where 
	$
	x^*(\sigma;\xu) := \argmin_{u\in X} f(u) + \phi(u) + ({\sigma}/{2})\|u - \xu\|^2
	$
	is the optimal solution to the perturbed problem.
\end{proposition}
\begin{proof}
	Using result \eqref{eq:3point} in Lemma~\ref{lem:grad_map_properties} (with $u = x^*(\sigma;\xu)$) and noting that $\eta\ge 2M$, we have
	\begin{align*}
		0 \le\ &  \left[f(x^{++}) + \phi(x^{++}) + ({\sigma}/{2})\|x^{++} - \xu\|^2\right] 
		\\
		&\ - \left[f(x^*(\sigma;\xu)) + \phi(x^*(\sigma;\xu)) + ({\sigma}/{2})\|x^*(\sigma;\xu) - \xu\|^2\right] 
		\\
		\le\ & ({\eta}/{2})\|x^*(\sigma;\xu) - x\|^2 - (({\sigma + \eta})/{2})\|x^*(\sigma;\xu) - x^{++}\|^2 - ({(\eta-M)}/{2})\|x^{++} - x\|^2
		\\
		\le \ & ({\eta}/{2})\|x^*(\sigma;\xu) - x\|^2 - (({\sigma + \eta})/{2})\|x^*(\sigma;\xu) - x^{++}\|^2 - ({\eta}/{4})\|x^{++} - x\|^2.
	\end{align*}
	Hence $\|\xpp - x\|\le 2\|x^*(\sigma;\xu) - x\|.$
	Also, denoting $x^+:=x^+(\eta;x)$, by the above relation and results \eqref{eq:xp_monotonicity} and \eqref{eq:xp_lip} in  Lemma~\ref{lem:grad_map_properties} we have
	\begin{align*}
		&\quad\ \|G_\eta(x)\| 
		\le
		\|G_{\sigma+\eta}(x)\| = (\sigma+\eta)\|x - x^+(\sigma+\eta;x)\| 
		\\
		& \le (\sigma+\eta)\|x - \xpp\| + (\sigma+\eta)\|\xpp - x^+(\sigma+\eta;x)\|	
		\\
		& \le 2(\sigma+\eta)\|x^*(\sigma;\xu) - x\| + \sigma\|x - \xu\|
		\\
		& \le (3\sigma+2\eta)\|x^*(\sigma;\xu) - x\| + \sigma\|x^*(\sigma;\xu) - \xu\|.
	\end{align*}
	
\qedsymbol\end{proof}

\vgap

The result obtained in \eqref{eq:grad_dist_relation} implies that our proposed Algorithm~\ref{alg:proposed_unconstrained} may be modified to solve problem \eqref{eq:problem}. The modified algorithm is presented in Algorithm~\ref{alg:proposed}.

\begin{algorithm}[h]
	\caption{\label{alg:proposed} An accumulative regularization method for solving problem \eqref{eq:problem}}
	\begin{algorithmic}
		\State In the $s$-th iteration of Algorithm~\ref{alg:proposed_unconstrained}, 
		compute an approximate solution $x_s\in X$ of 
		\begin{align}
			\label{eq:subproblem}
			x_s^*:=\argmin_{x\in X} \{F_s(x):=f(x) + \phi(x) + \frac{\sigma_s}{2}\|x - \xu_s\|^2\},
		\end{align}
		instead of subproblem~\eqref{eq:subproblem_unconstrained}, by running subroutine $\cA(f, \phi, \sigma_s, \xu_s, X, x_{s-1}, N_s)$ with $N_s$ gradient evaluations of $\nabla f$
		and initial point $x_{s-1}$.
	\end{algorithmic}
\end{algorithm}

Two remarks are in place for Algorithm~\ref{alg:proposed}. 
First, $F_s$ in the generalized accumulative proximal mapping subproblem \eqref{eq:subproblem} depends on the regularization parameter $\sigma_s$ and prox-center $\xu_s$, where $\xu_s$ is the convex combination of previous approximate solutions $x_0,\ldots,x_{s-1}$. 
Second, throughout our analysis we will assume that the algorithm $\cA$ has the following performance guarantees, which can be achieved, e.g., by the accelerated gradient method \cite{nesterov2018lectures}.

\begin{assumption}
	\label{assum:A_cond}
	Algorithm $\cA$ used to compute $x_s=\cA(f, \phi, \sigma_s, \xu_s, X, x_{s-1}, N_s)$ has the following performance guarantees after $N_s$ gradient evaluations of $\nabla f$:
	\begin{align}
		\label{eq:A_cond}
		F_s(x_s) - F_s(x_s^*) \le ({c_{\cA}L}/{N_s^2})\|x_{s-1} - x_s^*\|^2.
	\end{align} 
	Here $c_\cA$ is a universal constant that depends on algorithm $\cA$. 
\end{assumption}

We are now ready to study the convergence properties of Algorithm~\ref{alg:proposed}.
Throughout our analysis, we will use $x_s^*$ to denote the optimal solution to the subproblem \eqref{eq:subproblem} for all $s=0,\ldots, S$. In the case when $s=0$, we will set $\sigma_0=0$, so that $x_0^*=x^*$ is an optimal solution to the original problem \eqref{eq:problem}.
We start with the following proposition analogous to Proposition~\ref{pro:xstar_dist_shrink_unconstrained} regarding how 
the distances between $x_s$ and $x^*_s$ are related.
We skip its proof since it could be derived in a straightforward manner based on the proof of Proposition~\ref{pro:xstar_dist_shrink_unconstrained}.

\begin{proposition}
	\label{pro:xstar_dist_shrink}
	In  Algorithm~\ref{alg:proposed} we have
	\begin{align}
		\label{eq:dist_subproblem}
		& \|x_{s-1} - x_s^*\|\le \|x_{s-1} - x_{s-1}^*\|,\ \forall s = 1,\ldots, S\text{ and }
		\\
		\label{eq:dist_xu_xstarsig}
		&\sigma_s\|\xu_s - x_s^*\| 
		\le \sum_{i=1}^{s}(\sigma_{i-1} + \sigma_{i})\|x_{i-1}^* - x_{i-1}\|
		.
	\end{align}
\end{proposition}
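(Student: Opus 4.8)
The plan is to transcribe the proof of \cref{pro:xstar_dist_shrink_unconstrained} almost verbatim, simply carrying the composite term $\phi$ and the feasible set $X$ through unchanged. The first step is to rewrite the accumulative proximal subproblem \cref{eq:subproblem} in a ``two‑quadratic'' form. With $\gamma_s = 1 - \sigma_{s-1}/\sigma_s$, the defining relation \cref{eq:xus_unconstrained} is equivalent to $\sigma_s\xu_s = \sigma_{s-1}\xu_{s-1} + (\sigma_s - \sigma_{s-1})x_{s-1}$, and a direct completing‑the‑square computation shows that $\sigma_s\|x - \xu_s\|^2$ and $\sigma_{s-1}\|x - \xu_{s-1}\|^2 + (\sigma_s - \sigma_{s-1})\|x - x_{s-1}\|^2$ differ by a constant independent of $x$ (the coefficients of $\|x\|^2$ agree, and the linear terms in $x$ agree precisely because of that recursion for $\sigma_s\xu_s$). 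Since this rearrangement touches neither $f(x)+\phi(x)$ nor the constraint $x\in X$, I obtain
\[
x_s^* = \argmin_{x\in X}\ f(x) + \phi(x) + \frac{\sigma_{s-1}}{2}\|x - \xu_{s-1}\|^2 + \frac{\sigma_s - \sigma_{s-1}}{2}\|x - x_{s-1}\|^2,\quad \forall s\ge 1.
\]

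To prove \cref{eq:dist_subproblem}, write $F_{s-1}(x) := f(x) + \phi(x) + \frac{\sigma_{s-1}}{2}\|x - \xu_{s-1}\|^2$ and combine two optimality comparisons over the convex set $X$: from the optimality of $x_{s-1}^*$ for $F_{s-1}$ one has $F_{s-1}(x_{s-1}^*)\le F_{s-1}(x_s^*)$, and from the optimality of $x_s^*$ for the rewritten subproblem one has $F_{s-1}(x_s^*) + \tfrac{\sigma_s - \sigma_{s-1}}{2}\|x_s^* - x_{s-1}\|^2 \le F_{s-1}(x_{s-1}^*) + \tfrac{\sigma_s - \sigma_{s-1}}{2}\|x_{s-1}^* - x_{s-1}\|^2$; both are legitimate because $x_{s-1}^*, x_s^*\in X$. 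Adding the two and cancelling $F_{s-1}(x_{s-1}^*)$ leaves $\tfrac{\sigma_s - \sigma_{s-1}}{2}\|x_s^* - x_{s-1}\|^2 \le \tfrac{\sigma_s - \sigma_{s-1}}{2}\|x_{s-1}^* - x_{s-1}\|^2$, and since $\sigma_s>\sigma_{s-1}$ this gives \cref{eq:dist_subproblem}.

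For \cref{eq:dist_xu_xstarsig}, set $\alpha_i := \sigma_i - \sigma_{i-1}$, so $\gamma_s = \alpha_s/\sigma_s$ and $\sigma_s\xu_s = \sigma_{s-1}\xu_{s-1} + \alpha_s x_{s-1}$; unrolling with $\sigma_0 = 0$ yields $\sigma_s\xu_s = \sum_{i=1}^s\alpha_i x_{i-1}$ and $\sigma_s = \sum_{i=1}^s\alpha_i$, i.e.\ $\xu_s$ is the stated convex combination of $x_0,\dots,x_{s-1}$. Then I reproduce the telescoping identity from the proof of \cref{pro:xstar_dist_shrink_unconstrained}: starting from $\sigma_s(\xu_s - x_s^*) = \sum_{i=1}^s\alpha_i(x_{i-1} - x_s^*)$, peeling off the $i=s$ term, inserting $\pm x_{s-1}^*$ inside the remaining sum to recognize $\sum_{i=1}^{s-1}\alpha_i(x_{i-1} - x_{s-1}^*) = \sigma_{s-1}(\xu_{s-1} - x_{s-1}^*)$, and inserting $\pm x_{s-1}$, one reaches the recursion $\sigma_s(\xu_s - x_s^*) = \sigma_{s-1}(\xu_{s-1} - x_{s-1}^*) + \sigma_{s-1}(x_{s-1}^* - x_{s-1}) + \sigma_s(x_{s-1} - x_s^*)$, hence $\sigma_s(\xu_s - x_s^*) = \sum_{i=1}^s[\sigma_{i-1}(x_{i-1}^* - x_{i-1}) + \sigma_i(x_{i-1} - x_i^*)]$. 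Taking norms, applying the triangle inequality, and bounding $\|x_{i-1} - x_i^*\|\le\|x_{i-1} - x_{i-1}^*\|$ via the already-established \cref{eq:dist_subproblem} gives $\sigma_s\|\xu_s - x_s^*\|\le\sum_{i=1}^s(\sigma_{i-1}+\sigma_i)\|x_{i-1}^* - x_{i-1}\|$, which is \cref{eq:dist_xu_xstarsig}.

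I do not expect a genuine obstacle here. The only point that warrants a moment's thought is verifying that the reparametrization in the first step survives the addition of the nonsmooth composite $\phi$ and the restriction to $X$; it does, because that step merely rewrites the quadratic-in-$x$ perturbation, so the two formulations share the same feasible set $X$ and the same objective up to an additive constant. Everything downstream uses only the convex-combination structure of $\xu_s$ from \cref{eq:xus_unconstrained} and the fact that $x_s^*$ and $x_{s-1}^*$ are global minimizers over $X$, so it carries over from the unconstrained argument with no change — which is precisely why the paper states that the result follows in a straightforward manner from the proof of \cref{pro:xstar_dist_shrink_unconstrained}.
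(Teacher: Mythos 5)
Your proof is correct and is exactly the straightforward transcription the paper intends when it writes that \cref{pro:xstar_dist_shrink} ``could be derived in a straightforward manner based on the proof of \cref{pro:xstar_dist_shrink_unconstrained}'': the two-quadratic rewriting, the paired optimality comparisons yielding \cref{eq:dist_subproblem}, and the telescoping identity for $\sigma_s(\xu_s - x_s^*)$ are all carried over verbatim, with $f$ replaced by $f+\phi$ and $\R^n$ by $X$. You are also right that the only point worth checking — that the reparametrization of the quadratic perturbation is insensitive to $\phi$ and $X$ because it only shifts the objective by a constant — goes through without issue.
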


With the help of Propositions \ref{pro:grad_dist_relation} and \ref{pro:xstar_dist_shrink}, 
we are now ready to prove the convergence properties of Algorithm~\ref{alg:proposed} in Theorem~\ref{thm:conv_proposed} below.

\vgap

\begin{theorem}
	\label{thm:conv_proposed}
	Assume that the accuracy threshold $\varepsilon \le LD$, where $D$ is an upper bound on the distance to the optimal solution set such that $D\ge \operatorname{dist}(x_0,X^*):=\min_{x^*\in X^*}\|x_0 - x^*\|$. 	
	In Algorithm~\ref{alg:proposed}, suppose that the parameters are set to
	\begin{align}
		\label{eq:cond_proposed}
		S = 2 + \left\lceil\log_{4}({LD}/{\varepsilon})\right\rceil,  \sigma_s = 4^{s-3}{\varepsilon}/{D},\ \gamma_s=  {3}/{4},\ N_s = \left\lceil 8\sqrt{2c_\cA L/\sigma_s}\right\rceil.
	\end{align}
	If the algorithm $\cA$ for solving subproblems satisfies the convergence property \eqref{eq:A_cond} in Assumption~\ref{assum:A_cond}, 
	 we can compute an approximate solution $x_S$ with $\|G_{2L}(x_S)\|\le\varepsilon$ after 
	$2(1 + 8\sqrt{2c_\cA})\sqrt{{LD}/{\varepsilon}}$ gradient evaluations of $\nabla f$.
\end{theorem}
\begin{proof}
	Noting that function $F_s$ in the subproblem is $\sigma_s$-strongly convex, we have
	\begin{align*}
		F_s(x_s) - F_s(x_s^*) \ge \langle F_s'(x_s^*), x_s - x_s^*\rangle + \frac{\sigma_s}{2}\|x_s - x_s^*\|^2
		\ge\ & \frac{\sigma_s}{2}\|x_s - x_s^*\|^2,
	\end{align*}
	where the last inequality above follows from the optimality condition of $x_s^*$. Combining the above relation and the assumption on the sublinear rate of convergence of $\cA$ in \eqref{eq:A_cond}, we have
	$\|x_s - x_s^*\|\le ({1}/{N_s})\sqrt{({2c_\cA L}/{\sigma_s})}\|x_{s-1} - x_{s}^*\|.$
	Applying the selection of $N_s$ in \eqref{eq:cond_proposed} to the above relation and noting \eqref{eq:dist_subproblem} in  Proposition~\ref{pro:xstar_dist_shrink}, we have
	\begin{align}
		\|x_s - x_s^*\| \le \frac{1}{8}\|x_{s-1} - x_s^*\| \le \frac{1}{8}\|x_{s-1} - x_{s-1}^*\|,\ \forall s = 1,\ldots, S.
	\end{align}
	Also, by Proposition~\ref{pro:grad_dist_relation} (with $M = L$ and $\eta = 2L$) and Proposition~\ref{pro:xstar_dist_shrink} we have
	\begin{align}
		\|G_{2L}(x_S)\|\le (3\sigma_S + 4L)\|x^*_S - x_S\| + \sum_{s=1}^{S}(\sigma_{s-1} + \sigma_{s})\|x_{s-1}^* - x_{s-1}\|.
	\end{align}
	The above results, together with the facts that $\sigma_S\le L$ and $\sigma_s = 4\sigma_{s-1}$ for all $s>1$ due to \eqref{eq:cond_proposed}, then imply that
	\begin{align*}
		\|G_{2L}(x_S)\| \le\ & (3L + 4L)8^{-S}\|x^* - x_0\|  + \sigma_1\|x^* - x_0\| + \sum_{s=2}^{S}5\cdot 4^{s-2}\sigma_1 8^{-s+1}\|x^* - x_0\|
		\\
		\le\ & (7/4)L\cdot 4^{-S}\|x_0 - x^*\| + (9/4)\cdot \sigma_1\|x_0 - x^*\|.
	\end{align*}
	Substituting the choices of $S$ and $\sigma_1$ in \eqref{eq:cond_proposed} we have $\|G_{2L}(x_S)\|\le \varepsilon$. 
	It remains to estimate the total number of gradient evaluations. Noting from \eqref{eq:cond_proposed_unconstrained} that $\sigma_s\le \sigma_S\le L$ for all $s$, we have $N_s\le 1 + 8\sqrt{2c_\cA L/\sigma_s} \le (1 + 8\sqrt{2c_\cA})\sqrt{L/\sigma_s}$. Recalling that $\sigma_s=4\sigma_{s-1}$, the total number of gradient evaluations is bounded by
	\begin{align*}
		\sum_{s=1}^{S}N_s \le & (1 + 8\sqrt{2c_\cA})\sqrt{L}\sum_{s=1}^{S}\frac{1}{\sqrt{\sigma_s}} \le (1 + 8\sqrt{2c_\cA})\sqrt{\frac{L}{\sigma_1}}\sum_{s=1}^{\infty}2^{-(s-1)} 
		\\
		\le & 2(1 + 8\sqrt{2c_\cA})\sqrt{\frac{LD}{\varepsilon}}.
	\end{align*}
\qedsymbol\end{proof}
\vgap

A few remarks are in place. 
First, our result implies that in order to compute an approximate solution $x_S$ with a small projected gradient norm $\|G_{2L}(x_S)\|$, the number of gradient evaluations of $\nabla f$ can be bounded by $\cO(1)\sqrt{L D/\varepsilon}$. 
Second, the key ingredient of the analysis in this section is Proposition~\ref{pro:grad_dist_relation}. While its analogy \eqref{eq:xs_norm_bound_unconstrained} in the unconstrained case is straightforward, to extend such a result to projected gradients requires some properties of projected gradients described in Lemma~\ref{lem:grad_map_properties}. 
A few remarks are in order for the above result. 
\cmo{ Third, similar to the results in Theorem \ref{thm:conv_proposed_unconstrained}, here
the regularization parameter $\sigma_s$ starts at $\cO(\varepsilon/D)$ and is increasing exponentially until it reaches $L$. Finally, in our analysis we choose $\gamma_s = 3/4$. Other parameters, e.g., $\gamma_s = 1/2$ can also be chosen to attain similar gradient evaluation complexity.}

In the remaining part of this section, we briefly discuss
how to generalize our algorithm for
a simple nested composite optimization problem of the following form:
\begin{align}
	\label{eq:composite_problem}
	\min_{x\in X} \psi(f(x)) + \phi(x).
\end{align}
Here $X$ is a closed convex set, $\psi:\R\to\R$, $f:\R^n\to\R$, and $\phi:\R^n\to\R$ are closed convex functions, and $\psi$ is monotone non-decreasing (so that the problem is a convex optimization problem). \cmo{ Here we assume $\psi:\R\to\R$ and $f:\R^n\to\R$ only for simplicity of our discussion; all the analysis could be extended to more generalize problems of form $\psi:\R^m \to\R$ and $f:\R^n\to\R^m$.}
The difference between the above problem and problem \eqref{eq:problem} 
exists in the nested structure with $\psi$. 
Without loss of generality we also assume that the Lipschitz constant of $\psi$ is $1$. Moreover, we assume that $\psi$ is relatively simple such that the following \emph{generalized perturbed gradient mapping} can be computed efficiently:
\begin{align}
	\label{eq:xpp_c}
	\begin{aligned}
		& \xpp(\eta,\sigma,\xu;x)
		\\
		:=\ &\argmin_{u\in X}\psi(f(x) + \langle \nabla f(x), u-x\rangle) + \phi(u) + \frac{\sigma}{2}\|u - \xu\|^2 + \frac{\eta}{2}\|u - x\|^2.
	\end{aligned}
\end{align}
We still use the terminology that $\sigma$ and $\xu$ are the regularization parameter and prox-center of the above generalized perturbed gradient mapping. Clearly, $x^*\in X$ is an optimal solution to problem \eqref{eq:composite_problem} if and only if $G_{\eta}(x^*)= 0$ for some $\eta>0$, where we denote the \emph{generalized gradient mapping} and \emph{generalized projected gradient}  as 
\begin{align}
	\label{eq:xp_c}
	x^+(\eta;x):=\ & \argmin_{u\in X}\psi(l_f(x;u)) + \phi(u) + \frac{\eta}{2}\|u - x\|^2\text{ and }
	\\
	\label{eq:G_c}
	G_\eta(x):=\ & \eta(x - x^+(\eta;x))
\end{align}
respectively, where
\begin{align}	
	\label{eq:lf}
	l_f(x;u):=f(x) + \langle \nabla f(x), u-x\rangle
\end{align}
is the linear approximation of $f(x)$ around $x$.
Clearly, when $\psi$ is the identity function, problem \eqref{eq:composite_problem} reduces to problem \eqref{eq:problem}, and the definitions in \eqref{eq:xp_c} and \eqref{eq:G_c} reduces to \eqref{eq:xp} and \eqref{eq:G} respectively. 

Our proposed algorithm to solve the nested composite optimization problem \eqref{eq:composite_problem} is a straightforward extension of Algorithm~\ref{alg:proposed}, obtained by replacing subproblem \eqref{eq:subproblem} with 
\begin{align}
	\label{eq:subproblem_c}
	x_s^*:=\argmin_{x\in X} \{ F_s(x):=\psi(f(x)) + \phi(x) + \frac{\sigma_s}{2}\|x - \xu_s\|^2\}.
\end{align} 
Moreover, we need to assume that 
Assumption~\ref{assum:A_cond} about the performance of
Algorithm $\cA$ applies to the subproblem~\eqref{eq:subproblem_c}.
While the subproblem \eqref{eq:subproblem_c} has a nested structure, due to the simplicity of $\psi$ the above convergence property can be achieved through a nested optimization extension of the accelerated gradient method (with $c_\cA = 2)$ that only requires computations of generalized gradient mappings (see, e.g.,  \cite{zhang2020optimal,nesterov2018lectures,lan2015bundle}).
We state in the following theorem on the convergence properties of this more general algorithm, and
its proof is a straightforward modification of that of Theorem~\ref{thm:conv_proposed} and hence is skipped.

\begin{theorem}
	\label{thm:conv_proposed_c}
	Assume that the accuracy threshold $\varepsilon \le LD$, where $D$ is an upper bound on the distance to the optimal solution set such that $D\ge \operatorname{dist}(x_0,X^*):=\min_{x^*\in X^*}\|x_0 - x^*\|$. 	
	In Algorithm~\ref{alg:proposed} (with subproblem replaced by \eqref{eq:subproblem_c}), suppose that the parameters are set to
	\begin{align}
		S = 2 + \left\lceil\log_{4}({LD}/{\varepsilon})\right\rceil,  \sigma_s = 4^{s-3}{\varepsilon}/{D},\ \gamma_s = {3}/{4},\ N_s = \left\lceil{8\sqrt{2c_\cA L\sigma_s}}\right\rceil.
	\end{align}
	If the algorithm $\cA$ for solving \cmo{subproblems} satisfies the  convergence property \eqref{eq:A_cond} in Assumption~\ref{assum:A_cond}, 
	 we can compute an approximate solution $x_S$ with $\|G_{2L}(x_S)\|\le\varepsilon$ after 
	$2(1 + 8\sqrt{2c_\cA})\sqrt{{LD}/{\varepsilon}}$ gradient evaluations of $\nabla f$.
\end{theorem}
\vgap

\section{Parameter-free algorithm for convex problems}
\label{sec:pf}

For all the convergence results presented in the previous sections, we need  to know some important problem parameters including the Lipschitz constant $L$ and an upper bound $D$ on the distance $\operatorname{dist}(x_0, X^*)$. In this section, we study parameter-free implementation strategies that do not require such information. For simplicity, we will focus on the unconstrained case, but the parameter-free implementation strategies can be derived similarly for the cases with simple feasible set and (nested) composite objective function.

We first study how to remove the requirement on the information of $L$.
Observe that for Algorithm~\ref{alg:proposed_unconstrained} and its convergence analysis in Section~\ref{sec:proposed_unconstrained}, $L$ appears 
for the first time in the gradient-to-distance relationship in \eqref{eq:xs_norm_bound_unconstrained}. In the lemma below, we show that the constant $L$ in \eqref{eq:xs_norm_bound_unconstrained} can be replaced by its local estimate. 

\vgap
\begin{lemma}
	\label{thm:Mlemma}
	Suppose that $g:\R^n\to\R$ is a convex smooth function.
	For any $x\in\R^n$, $M>0$, $\eta\ge 2M$, and $\xpp:=x - (1/\eta)\nabla g(x)$,
	if $\xpp$ and $x$ satisfy
	\begin{align}
		\label{eq:M}
		g(\xpp) - g(x) - \langle \nabla g(x), \xpp - x\rangle \le \frac{M}{2}\|\xpp - x\|^2,
	\end{align}
	then we have $\|\nabla g(x)\|\le \sqrt{2}\eta \|x - x^*\|$, where $x^*$ is any minimizer of function $g$.
\end{lemma}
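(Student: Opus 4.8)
The plan is to use the explicit formula $\xpp = x - (1/\eta)\nabla g(x)$ to convert the hypothesis \cref{eq:M} into a ``sufficient decrease'' estimate, and then to combine that estimate with convexity and global optimality of $x^*$. First I would substitute $\xpp - x = -(1/\eta)\nabla g(x)$, so that $\langle \nabla g(x), \xpp - x\rangle = -\|\nabla g(x)\|^2/\eta$ and $\|\xpp - x\|^2 = \|\nabla g(x)\|^2/\eta^2$; then \cref{eq:M} rearranges to
\[
	g(\xpp) \le g(x) - \frac{1}{\eta}\|\nabla g(x)\|^2 + \frac{M}{2\eta^2}\|\nabla g(x)\|^2 .
\]
Using $\eta \ge 2M$, hence $M/\eta \le 1/2$, the last term is at most $\tfrac{1}{4\eta}\|\nabla g(x)\|^2$, which gives $g(\xpp) \le g(x) - \tfrac{3}{4\eta}\|\nabla g(x)\|^2$. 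This is the analogue of the classical descent lemma, except the (possibly unavailable) Lipschitz constant is replaced by the surrogate constant $M$.

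Next I would invoke the minimizer. Since $x^*$ is a global minimizer, $g(x^*) \le g(\xpp)$, so the decrease estimate yields $\tfrac{3}{4\eta}\|\nabla g(x)\|^2 \le g(x) - g(x^*)$. On the other side, convexity of $g$ gives the gradient inequality $g(x) - g(x^*) \le \langle \nabla g(x), x - x^*\rangle$, and Cauchy--Schwarz bounds the right-hand side by $\|\nabla g(x)\|\,\|x - x^*\|$. Chaining these two inequalities produces $\tfrac{3}{4\eta}\|\nabla g(x)\|^2 \le \|\nabla g(x)\|\,\|x - x^*\|$.

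Finally I would dispose of the trivial case $\nabla g(x) = 0$ (the conclusion then holds vacuously) and otherwise divide through by $\|\nabla g(x)\|$ to obtain $\|\nabla g(x)\| \le \tfrac{4\eta}{3}\|x - x^*\|$; since $4/3 \le \sqrt{2}$, this implies the asserted bound $\|\nabla g(x)\| \le \sqrt{2}\,\eta\|x - x^*\|$. I do not expect any genuine obstacle here: the only care needed is the bookkeeping with $M/\eta \le 1/2$, and it is worth remarking that the argument in fact delivers the sharper constant $4/3$, leaving slack that could accommodate a weaker version of \cref{eq:M} or a less tight choice of $\eta$ in the later parameter-free developments.
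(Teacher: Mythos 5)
Your proof is correct, but it follows a different route from the paper's. The paper observes that $\xpp$ is the unconstrained, unperturbed gradient mapping $\xpp(\eta,0,\xu;x)$ and invokes the general three-point inequality \cref{eq:3point} of \cref{lem:grad_map_properties} with $u=x^*$; the bound then falls out of the residual term $-\tfrac{\eta}{4}\|\xpp-x\|^2$ together with $\|\xpp-x\|=\|\nabla g(x)\|/\eta$, giving $\|\xpp - x\|^2 \le 2\|x-x^*\|^2$ and hence the constant $\sqrt{2}$. You instead run the classical argument directly: the hypothesis \cref{eq:M} plus $\eta\ge 2M$ gives the sufficient-decrease estimate $g(\xpp)\le g(x)-\tfrac{3}{4\eta}\|\nabla g(x)\|^2$, which you chain with $g(x)-g(x^*)\le\langle\nabla g(x),x-x^*\rangle\le\|\nabla g(x)\|\,\|x-x^*\|$ from convexity and Cauchy--Schwarz. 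Both arguments use the same two ingredients (the $M$-descent condition and convexity of $g$), but yours is self-contained and, as you note, yields the slightly sharper constant $4\eta/3<\sqrt{2}\,\eta$; the paper's version buys uniformity with the constrained and composite settings, where the three-point machinery of \cref{lem:grad_map_properties} is needed anyway. Your handling of the trivial case $\nabla g(x)=0$ and of the estimate $M/\eta\le 1/2$ is correct, so there is no gap.
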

\begin{proof}
	Note that our definition of $\xpp$ is exactly $\xpp(\eta,0,\xu;x)$ described in \eqref{eq:xpp} (with $X\in\R^n$, $\sigma=0$, and $\phi\equiv 0$). Applying result \eqref{eq:3point} in Lemma~\ref{lem:grad_map_properties} and noting that $\eta\ge 2M$ we have
	\begin{align*}
		g(\xpp) - g(u) \le \frac{\eta}{2}\|u - x\|^2 - \frac{\eta}{2}\|u - \xpp\|^2 - \frac{\eta}{4}\|\xpp - x\|^2, \ \forall u\in \R^n.
	\end{align*}
	Letting $u=x^*$ in the above relation, using the optimality of $x^*$ and the fact that $\xpp - x = -(1/\eta)\nabla g(x)$ we conclude that 
	$	\|\nabla g(x)\|^2 \le 2\eta^2\|x^* -x\|^2$.
\qedsymbol\end{proof}

\vgap

\begin{algorithm*}[h] 
	\caption{$M$=Backtracking($g$, $\sigma$, $x$, $M_0$) \label{alg:_searchL}}
	\begin{algorithmic}
		\For{$j=0,1,\ldots,$}
		\State Set
		$			\xpp = x - ({1}/{(2(M_j + \sigma))})\nabla g(x).
		$
		\State If 
		\begin{align}
			\label{eq:linesearch_estL_success_criterion}
			g(\xpp) - g(x) - \langle \nabla g(x), \xpp - x\rangle \le ({(M_j + \sigma)}/{2})\|\xpp - x\|^2, 
		\end{align}
		then {\bf terminate} with $M=M_j$. Otherwise, set $M_{j+1} = 2M_{j}$. 
		\EndFor
	\end{algorithmic}
\end{algorithm*}

The above lemma allows us to design a backtracking function described in Algorithm~\ref{alg:_searchL}. Applying Lemma~\ref{thm:Mlemma} with $x = x_s$, $x^* = x_s^*$, $g = f_s$, $M=M_s+\sigma_s$, and $\eta = 2(M_s+\sigma_s)$, we conclude that whenever \eqref{eq:M} holds, 
we have $\|\nabla f_s(x_s)\|\le 2\sqrt{2}(M_s+\sigma_s)\|x_s - x_s^*\|$. As a consequence,
\begin{align}
	\label{eq:xs_norm_bound_unconstrained_est}
	\begin{aligned}
		& \|\nabla f(x_s)\| = \|\nabla f_s(x_s) - \sigma_s(x_s - \xu_s)\| 
		=  \|\nabla f_s(x_s) - \sigma_s(x_s - x_s^*) - \sigma_s(\xu_s-x_s^*)\| 
		\\
		\le &  (2\sqrt{2}M_s+(2\sqrt{2}+1)\sigma_s)\|x_s - x_s^*\| + \sigma_s\|x_s^* - \xu_s\|,
	\end{aligned}
\end{align}
which is analogous to the gradient-to-distance relation \eqref{eq:xs_norm_bound_unconstrained} but with $L$ replaced by $M_s$.
Apparently, we can apply one single gradient descent step with backtracking, i.e., Algorithm~\ref{alg:_searchL} with input $g=f_s$, $\sigma =\sigma_s$, $x= x_s$ and initial guess $M_0$, to estimate the local Lipschitz constant $M_s$.

We are now ready to modify Algorithm~\ref{alg:proposed_unconstrained} to remove its requirement on the knowledge of $L$ entirely.
This proposed accumulative regularization (AR) method
(see Algorithm~\ref{alg:proposed_unconstrained_searchL}) can be called as a function $(\hat x,M)=AR(f,x_0,\sigma_1, M_0)$. Here the input arguments $x_0$, $\sigma_1$ and $M_0$ denote the initial point, initial regularization parameter, and initial guess on the Lipschitz constant $L$, respectively. Once terminated, this method returns an approximate solution with small gradient norm and an updated estimate of the Lipschitz constant, denoted by $\hat x$ and $M$, respectively.

\begin{algorithm}[!h]
	\caption{\label{alg:proposed_unconstrained_searchL} Accumulative regularization (AR) without the knowledge of $L$}
	\begin{algorithmic}
		\Function{$(\hat x, M)=\ $AR}{$f, x_0, \sigma_1, M_0$}
		\State Set the initial prox-center $\xu_0 = x_0$, and denote $\sigma_0=0$.
		\For {$s=1,2,\ldots$}
		\State If $s>1$, then set $\sigma_s=4\sigma_{s-1}$.
		\State Set $\xu_s =  (1-\gamma_s) \xu_{s-1} + \gamma_s x_{s-1}$ with $\gamma_s=1 - \sigma_{s-1}/\sigma_s$.
		
		\State Compute an approximate solution $x_s\in \R^n$ of
		\begin{align}
			\label{eq:subproblem_unconstrained_searchL}
			x_s^*:=\argmin_{x\in \R^n} \{ f_s(x):=f(x) + \dfrac{\sigma_s}{2}\|x - \xu_s\|^2\}
		\end{align}
		by running subroutine $x_s = \cA(f, \sigma_s, \xu_s, x_{s-1})$ with initial point $x_{s-1}$. 
		
		\State Set $M_s=$ Backtracking ($f_s$, $\sigma_s$, $x_s$, $M_{s-1}/2$).
		\State If $\sigma_s\ge M_s$, then {\bf terminate} with $\hat x=x_s$ and $M=M_s$ . 
		\EndFor
		\EndFunction
	\end{algorithmic}
\end{algorithm}

It is worth noting several differences between our proposed Algorithm~\ref{alg:proposed_unconstrained_searchL} and the previous Algorithm~\ref{alg:proposed_unconstrained}. 
First, after computing an approximate solution $x_s$ to subproblem~\eqref{eq:subproblem_unconstrained_searchL}, we 
call the function {\sl Backtracking} in~Algorithm~\ref{alg:_searchL} to estimate a bound $M_s$ for the Lipschitz constant $L$, which will ensure the relation in \eqref{eq:xs_norm_bound_unconstrained_est}. The estimates $\{M_s\}$ satisfy $M_s\le 2L$.

Second, in Algorithm~\ref{alg:proposed_unconstrained_searchL} we no longer require a maximum number of subproblems $S$ to be given as in Algorithm~\ref{alg:proposed_unconstrained}. Rather, we will keep increasing $s$ until termination criterion $\sigma_s\ge M_s$ holds. The algorithm is guaranteed to terminate since $\sigma_s$ is increasing exponentially and $M_s\le 2L$.

Finally, unlike Algorithm~\ref{alg:proposed_unconstrained}, when calling subroutine $\cA$ to compute $x_s$ we no longer provide the number of gradient evaluations $N_s$. Instead, 
we make the following assumption regarding the convergence properties of algorithm $\cA$.

\vgap

\begin{assumption}
	\label{assum:A_unconstrained_searchL}
	The subroutine $x_s =\cA(f, \sigma_s, \xu_s, x_{s-1})$ has the following performance guarantees after the $k$-th gradient evaluation of $\nabla f$:
	\begin{align}
		\label{eq:A_cond_unconstrained_searchL}
		f_s(x_s^k) - f_s(x_s^*) \le ({L_s^k}/{k^2})\|x_{s-1} - x_s^*\|^2.
	\end{align} 
	Here $x_s^k$ is the computed approximate solution and $L_s^k$ is an estimate of Lipschitz constant $L$ such that $L_s^k\le c_\cA L$, where $c_\cA$ is a universal constant that depends on algorithm $\cA$. 
	Subroutine $\cA$ will terminate whenever $k\ge 8\sqrt{2L_s^k/\sigma_s}$ and output the approximate solution $x_s = x_s^k$.
\end{assumption}
The above sublinear convergence assumption is satisfied by many algorithms. For example, if we use a line search based uniformly optimal accelerated gradient method in \cite{nesterov2018lectures} (see, e.g.,  \cite{nesterov2015universal}),  the assumption is satisfied with $c_\cA = 4$.

\vspace{0.1in}

We state below in Theorem~\ref{thm:conv_proposed_unconstrained_searchL}
the main convergence properties for Algorithm~\ref{alg:proposed_unconstrained_searchL}.
The proof of this result is analogous to that of Algorithm~\ref{alg:proposed_unconstrained}, but
we need to incorporate the adaptive estimation of $L$ into the overarching picture in \eqref{eq:xs_norm_bound_unconstrained}--\eqref{eq:gradS_estimate_unconstrained}. 

\begin{theorem}
	\label{thm:conv_proposed_unconstrained_searchL} 
		For any input argument $M_0, \sigma_1>0$, the AR method outputs an $\hat x$ with $\|\nabla f(\hat x)\|\le 5\sigma_1\operatorname{dist}(x_0, X^*)$ within $4 + 3\sqrt{\max\{M_0/2, 2L\}/\sigma_1}+16\sqrt{2c_\cA }\sqrt{L/\sigma_1}$ gradient evaluations of $\nabla f$.
		Specially, if $M_0\le 4L$, the total number of gradient evaluations is bounded by $4 + C_1\sqrt{L/\sigma_1}$, where $C_1:= \sqrt{2}(3 + 16\sqrt{2c_\cA})$ is a universal constant. 
		If in addition $\sigma_1 = \varepsilon/(5D)$ for some $D>0$, then an approximate solution $\hat x$ satisfying $\|\nabla f(\hat x)\|\le \varepsilon\operatorname{dist}(x_0, X^*)/D$  can be computed within at most  $4 + \sqrt{5}C_1\sqrt{LD/\varepsilon}$ gradient evaluations of $\nabla f$. 
	\end{theorem}
	\begin{proof}
		First note that by \eqref{eq:dist_xu_xstarsig_unconstrained} in Proposition~\ref{pro:xstar_dist_shrink_unconstrained} and the relation in \eqref{eq:xs_norm_bound_unconstrained_est}, we have 
		\begin{align}
			\label{eq:gradS_estimate_unconstrained_searchL}
			\begin{aligned}
				\|\nabla f(x_s)\|
				&	\le (2\sqrt{2}M_s + (1+2\sqrt{2})\sigma_s)\|x_s -x_s^*\| \\
				&\quad + \sigma_1\|x^* - x_0\| + \sum_{i=2}^s(\sigma_{i-1} + \sigma_i)\|x_{i-1}^* - x_{i-1}\|,
			\end{aligned}
		\end{align}
		which is analogous to \eqref{eq:gradS_estimate_unconstrained}.
		Consider the call to the subroutine $\cA$ for minimizing $f_s(x)$ in \eqref{eq:subproblem_unconstrained_searchL}. Noting that $f_s$ is $\sigma_s$-strongly convex, we have $\|x - x_s^*\|^2 \le ({2}/{\sigma_s})(f_s(x) - f_s(x_s^*)),\ \forall x\in\R^n$. In view of this observation and Assumption~\ref{assum:A_unconstrained_searchL}, after the $k$-th gradient evaluation of $\nabla f$, subroutine $\cA$ will compute an approximation solution $x_s^k$ such that
		\begin{align}
			\label{eq:A_dist_shrink}
			\|x_s^k - x_s^*\|\le\dfrac{1}{k}\sqrt{\dfrac{2L_s^k}{\sigma_s}}\|x_{s-1} - x_{s}^*\|\le  \dfrac{1}{k}\sqrt{\dfrac{2c_\cA L}{\sigma_s}}\|x_{s-1} - x_{s}^*\|.
		\end{align}
		Also by	Assumption~\ref{assum:A_unconstrained_searchL}, the number of gradient evaluations that $\cA$ performs is bounded by 
		\begin{align}
			\label{eq:Ns}
			N_s:=\lceil8\sqrt{2L_s/\sigma_s}\rceil\le \lceil8\sqrt{2c_\cA L/\sigma_s}\rceil.
		\end{align} 
		By the above two relations and \eqref{eq:dist_subproblem_unconstrained} in  
		Proposition~\ref{pro:xstar_dist_shrink_unconstrained}, the output solution $x_s$ computed at the termination of subroutine $\cA$ applied to minimize $f_s$ satisfies
		\begin{align*}
			\|x_s - x_s^*\| \le \frac{1}{8}\|x_{s-1} - x_s^*\| \le \frac{1}{8}\|x_{s-1} - x_{s-1}^*\|,\ \forall s = 1,\ldots, S.
		\end{align*}
		Using the above result and the fact that $\sigma_s = 4\sigma_{s-1}$ for all $s>1$ in our previous estimate of $\|\nabla f(x_s)\|$ in  \eqref{eq:gradS_estimate_unconstrained_searchL}, we have
		\begin{align}
			\label{eq:tmp_grad_est}
			\begin{aligned}
				\|\nabla f(x_s)\| \le\ & (2\sqrt{2}M_s + (1+2\sqrt{2})\sigma_s)8^{-s}\|x^* - x_0\|  
				\\
				&\ + \sigma_1\|x^* - x_0\| + \sum_{i=2}^{s}5\cdot 4^{i-2}\sigma_1 8^{-i+1}\|x^* - x_0\|
				\\
				\le\ & (\sqrt{2}M_s + (1/2+\sqrt{2})\sigma_s) 4^{-s}\|x_0 - x^*\| + (9/4)\sigma_1\|x_0 - x^*\|.
			\end{aligned}
		\end{align}
		Here $x^*$ is any solution in the optimal solution set $X^*$. 
		
		We will now estimate $\sigma_s$ and $4^{-s}$ in the above relation when the termination criterion $\sigma_s\ge M_s$ is satisfied. 
		If the criterion is satisfied when $s=1$, by \eqref{eq:tmp_grad_est} the output $\hat x=x_1$ satisfies
		\begin{align*}
			\|\nabla f(\hat x)\| \le (1/2+2\sqrt{2})\sigma_1 4^{-1}\|x_0 - x^*\| + (9/4)\sigma_1\|x_0 - x^*\|\le 5\sigma_1\|x_0 - x^*\|.
		\end{align*}
		Otherwise, let $S>1$ be the smallest $s$ when the termination criterion $\sigma_s\ge M_s$ is satisfied. Observe in the backtracking function that $M_s\le\max\{M_{s-1}/2, 2L\}$. Thus we have $M_s\le \max\{M_0/2, 2L\}$ for $s=1,\ldots, S$. Use this observation and recalling that $\sigma_s = 4\sigma_{s-1}$, we have 
		\begin{align}
			\label{eq:sigmaS_bound}
			M_S\le 4^{S-1}\sigma_1 = \sigma_{S} = 4\sigma_{S-1}<4M_{S-1}\le \max\{2M_0, 8L\}.
		\end{align}
		Thus by \eqref{eq:tmp_grad_est}, during termination the output $\hat x = x_S$ satisfies
		\begin{align*}
			\|\nabla f(\hat x)\| \le & (1/2+2\sqrt{2})\sigma_S 4^{-S}\|x_0 - x^*\| + (9/4)\sigma_1\|x_0 - x^*\|
			\le 5\sigma_1\|x_0 - x^*\|.
		\end{align*}
		Since $x^*$ here is an arbitrary optimal solution, we have $\|\nabla f(\hat x)\|\le 5\sigma_1\operatorname{dist}(x_0, X^*)$.	
		
		Next we will bound the total number of gradient estimations. 
		If the termination criterion is satisfied when $s=1$,  bounding the numbers of gradient evaluations required for backtracking line-search and subroutine $\cA$ respectively, we have the bound $1 + N_1$. Recalling the value of $N_s$ and its bound in \eqref{eq:Ns}, this number is bounded by $ 1 + 8\sqrt{2c_A L/\sigma_1}$. If the termination criterion is satisfied at some $s=S>1$,  the total number of gradient evaluations is $S+\sum_{s=1}^{S}N_s$. Here by \eqref{eq:sigmaS_bound} we have $S\le 2 + \log_4(\max\{M_0/2, 2L\}/\sigma_1)$. Also, recalling the value of $N_s$ and its bound in \eqref{eq:Ns} and noting that $\sigma_s = 4\sigma_{s-1}$ for all $s$ we have
		\begin{align*}
			\sum_{s=1}^{S}N_s \le \sum_{s=1}^{S}1 + 8\sqrt{2c_\cA L/\sigma_s} \le & S + 8\sqrt{2c_\cA L/\sigma_1}\sum_{s=1}^{\infty}2^{-(s-1)}  \le S + 16\sqrt{2c_\cA L/\sigma_1}.
		\end{align*}      
		Summarizing the above we can bound the total number of gradient evaluations by
		\begin{align}
			\label{eq:total_gradient}
			\begin{aligned}
				& S+\sum_{s=1}^{S}N_s 
				\le 2S + 16\sqrt{2c_\cA L/\sigma_1} \le 2(2 + \log_4(\max\{M_0/2, 2L\}/\sigma_1)) + 16\sqrt{2c_\cA L/\sigma_1}
				\\ 
				\le\ & 4 + 3\sqrt{\max\{M_0/2, 2L\}/\sigma_1}+16\sqrt{2c_\cA }\sqrt{L/\sigma_1}. 
			\end{aligned}
		\end{align}
		Here in the last inequality we use the fact that $2\log_4 t \cmo{<} 3\sqrt{t}$ for all $t>0$.
		Note that the above bound is also an upper bound of number of gradient evaluations if the termination criterion is satisfied when $s=1$. 
		The remainder parts of result then immediately follows from \eqref{eq:total_gradient}.
	\qedsymbol\end{proof}
	\vgap
	
	A few remarks are in place. 
	First, in the last result of the above theorem we require $M_0\le 4L$. This condition can be satisfied if we select any $z_0\not=x_0$ and set $M_0:=\|\nabla f(x_0) - \nabla f(z_0)\|/\|x_0-z_0\|$.
	
	Second, in the above result we only consider the total number of gradient evaluations of $\nabla f$. We may also bound the total number of function evaluations of $f$ based on the above result. To estimate the bound, let us assume that the total number of gradient evaluations of $\nabla f$ and function evaluations of $f$ are the same within each call to the subrountine $\cA$. Under such assumption, we can observe that the only difference between the total number of function evaluations of $f$ and gradient evaluations of $\nabla f$ is the extra function evaluations in the backtracking function whenever the criterion \eqref{eq:linesearch_estL_success_criterion} is not satisfied. For the $s$-th call to the backtracking function, such number of extra function evaluations is bounded by $2+\log_2(M_s/M_{s-1})$. Thus the total number of extra function evaluations is bounded by 
	$
		2S + \log_2(2L/M_0),
	$
	where $S$ is the number $s$ when the AR method terminates. Recalling the bound $S\le 2 + \log_4(\max\{M_0/2, 2L\}/\sigma_1)$ in the proof of the above theorem, if the input arguments of the AR method satisfy $M_0\le 4L$ and $\sigma_1=\varepsilon/(5D)$ for some $D>0$,  the total number of function evaluations of $f$ required by the AR method is bounded by $8 + \sqrt{5}C_1\sqrt{LD/\varepsilon} + 2\log_4(10LD/\varepsilon) + \log_2(2L/M_0)$. This is the same order to the total number of gradient evaluations of $\nabla f$, with an addition of a logarithmic term.
	
	Third, by Theorem~\ref{thm:conv_proposed_unconstrained_searchL}, if we have information on the upper bound of the distance to any optimal solution, i.e., $D\ge \operatorname{dist}(x_0, X^*)$, then by setting $\sigma_1=\varepsilon/(5D)$ the output $\hat x$ satisfies $\|\nabla f(\hat x)\|\le \varepsilon$. The number of gradient evaluations for computing such an approximate solution is given by $\cO(1)\sqrt{LD/\varepsilon}$.
	Note that if $D \gg \operatorname{dist}(x_0, X^*)$,  our complexity result could be worse than the desired $\cO(1)\sqrt{L\operatorname{dist}(x_0, X^*)/\varepsilon}$. 
	However, the case when $D > \operatorname{dist}(x_0, X^*)$ can be avoided entirely by choosing $D=\|\nabla f(x_0)\|/(2\sqrt{2}M)$, where $M=$Backtracking$(f, 0, x_0, M_0)$ (see Algorithm~\ref{alg:_searchL}). Applying Lemma~\ref{thm:Mlemma} with $x=x_0$, $g= f$, and $\eta = 2M$, we have $\|\nabla f(x_0)\|\le 2\sqrt{2}M\|x_0 - x^*\|$ and hence $D\le \operatorname{dist}(x_0, X^*)$.
	
	Fourth, if $D$ is chosen optimistically such that $D\le\operatorname{dist}(x_0, X^*)$, then our output $\hat x$ will not satisfy the desired accuracy threshold $\|\nabla f(\hat x)\|\le\varepsilon$. 
	However, in such case the gradient evaluation complexity $\cO(1)\sqrt{LD/\varepsilon}$ is also small. Therefore, if $\|\nabla f(\hat x)\|\le \varepsilon$ is not satisfied when the algorithm terminates, we may simply discard all results and restart the computation with a larger guess $D$. Such ``guess-and-check'' strategy will maintain our desired gradient complexity while guaranteeing that our output $\hat x$ satisfies $\|\nabla f(\hat x)\|\le \varepsilon$. We describe such a scheme in Algorithm~\ref{alg:proposed_unconstrained_guess_and_check}. 
	
	\begin{algorithm}[h]
		\caption{\label{alg:proposed_unconstrained_guess_and_check} A ``guess-and-check'' implementation of the AR method}
		\begin{algorithmic}
			\Require Target accuracy $\varepsilon>0$ and initial point $x_0\in\R^n$.
			\State Select any $z_0\not= x_0$ \cmo{ such that $\nabla f(x_0) \not= \nabla f(z_0)$}. Set $\tilde M_0 = \|\nabla f(x_0) - \nabla f(z_0)\|/\|x_0-z_0\|$, $M_0 = $Backtracking$(f,0,x_0, \tilde M_0)$ (see Algorithm~\ref{alg:_searchL}), and $D_0 = \|\nabla f(x_0)\|/(2\sqrt{2}M_0)$.
			\For {$t=1,2,\ldots$}
			\State Set $D_t = 4D_{t-1}$.
			\State Set $(\hat x, M_t) = \ $AR$(f, x_0, \varepsilon/(5D_t), M_{t-1})$ (see  Algorithm~\ref{alg:proposed_unconstrained_searchL}).
			\State If $\|\nabla f(\hat x)\|\le \varepsilon$, then {\bf terminate} with $\hat x$. 
			\EndFor
		\end{algorithmic}
	\end{algorithm}
	
	The performance of the above ``guess-and-check'' AR implementation in Algorithm~\ref{alg:proposed_unconstrained_guess_and_check} is stated in the following theorem.
	
	\vgap
	\begin{theorem}
		Suppose that $\|\nabla f(x_0)\|> \varepsilon$ in Algorithm~\ref{alg:proposed_unconstrained_guess_and_check}. The algorithm computes a solution $\hat x$ such that $\|\nabla f(\hat x)\|\le\varepsilon$ with at most $4\lceil\log_4(4\sqrt{2}L\operatorname{dist}(x_0, X^*)/\varepsilon)\rceil+ 4\sqrt{5}C_1\sqrt{L\operatorname{dist}(x_0,X^*)/\varepsilon}$ gradient evaluations of $\nabla f$. 
		Here $C_1$ is a universal constant defined in Theorem~\ref{thm:conv_proposed_unconstrained_searchL}.
	\end{theorem}
	\begin{proof}
		Applying Lemma~\ref{thm:Mlemma} with $x=x_0$, $g= f$, and $\eta = 2M_0$ we have $\|\nabla f(x_0)\|\le 2\sqrt{2}M_0\|x_0 - x^*\|$ and hence $D_0\le \operatorname{dist}(x_0, X^*)$.
		From Theorem~\ref{thm:conv_proposed_unconstrained_searchL}, in the $t$-th call of the AR function we have a solution $\hat x$ such that $\|\nabla f(\hat x)\|\le \varepsilon\operatorname{dist}(x_0, X^*)/D_t$.
		Also note that whenever $t\ge T:=\lceil\log_4(\operatorname{dist}(x_0, X^*)/D_0)\rceil$,  $D_t = 4^tD_0 \ge \operatorname{dist}(x_0, X^*)$.
		By Theorem~\ref{thm:conv_proposed_unconstrained_searchL}, Algorithm~\ref{alg:proposed_unconstrained_guess_and_check} will terminate after at most $T$ calls to the AR function. The total number of gradient evaluations of $\nabla f$ is bounded by $4T+\sum_{t=1}^{T}\sqrt{5}C_1\sqrt{10LD_t/\varepsilon}$, where $T\le \lceil\log_4(4\sqrt{2}L\operatorname{dist}(x_0, X^*)/\varepsilon)\rceil$ and
		\begin{align*}
			& \sum_{t=1}^{T}\sqrt{LD_t/\varepsilon} = \sqrt{LD_0/\varepsilon}\sum_{t=1}^{T}2^t \le 2\sqrt{LD_0 4^T/\varepsilon}\le 4\sqrt{L\operatorname{dist}(x_0,X^*)/\varepsilon}.
		\end{align*}
	\qedsymbol\end{proof}
	\vgap
	
	By the above result, Algorithm~\ref{alg:proposed_unconstrained_guess_and_check} is a parameter-free algorithm that has gradient evaluation complexity of order $\cO(\sqrt{L\operatorname{dist}(x_0, X^*)/\varepsilon})$. To the best of our knowledge, this is the first parameter-free algorithm with optimal complexity for gradient minimization in the literature. Similar to the second remark after Theorem~\ref{thm:conv_proposed_unconstrained_searchL}, we can prove that the total number of function evaluations of $f$ is of the same order, with an addition of a logarithmic term $\log_2(L/M_0)$.
	
	\section{Optimal and parameter-free algorithm for strongly convex problems}
	\label{sec:sc}
	Our goal in this section is to develop an optimal and parameter-free algorithm for $\min_{x\in\R^n}f(x)$, in which $f$ is both $L$-smooth and $\mu$-strongly convex. 
	
	Suppose for now that $\mu$ and $L$ are given. 
	Since $\|\nabla f(\hat x)\|\le L\|\hat x - x^*\|$, 
	we can guarantee $\|\nabla f(\hat x)\|\le \varepsilon$ by computing an approximate solution $\hat x$ with $\|\hat x - x^*\|\le \varepsilon/L$. By applying, for example, the accelerated gradient method \cite{nesterov2018lectures}, we can compute such a solution within at most $\cO(1)\sqrt{L/\mu}\log(L\|\nabla f(x_0)\|/(\mu \varepsilon))$ gradient evaluations.
	It turns out that with the help of improved complexity results on gradient minimization for convex unconstrained problems, we can achieve the optimal 
	$\cO(1)\sqrt{L/\mu}\log(\|\nabla f(x_0)\|/\varepsilon)$ 
	complexity by using a straightforward restarting strategy. Specifically, consider any method (e.g., our proposed AR method or the ones in \cite{kim2021optimizing,diakonikolas2022potential,lee2021geometric}) that has an $\cO(1)\sqrt{L\|x_0 - x^*\|/\varepsilon}$ complexity for gradient minimization. Setting $\varepsilon=\|\nabla f(x_0)\|/2$  and using the fact that $\|x_0 - x^*\|\le \|\nabla f(x_0)\|/\mu$ due to the strong convexity of $f$, we conclude that it takes at most $\cO(1)\sqrt{L/\mu}$ gradient evaluations to obtain a solution $\hat x$ s.t. $\|\nabla f(\hat x)\|\le \|\nabla f(x_0)\|/2$. 
	Therefore, starting from $x_0$, by simply restarting the algorithm whenever the gradient norm is reduced by half, we can obtain an approximate solution $\hat x$ with $\|\nabla f(\hat x)\|\le\varepsilon$ in at most $\cO(1)\log(\|\nabla f(x_0)\|/\varepsilon)$ restarts. The total gradient evaluation is bounded by $\cO(1)\sqrt{L/\mu}\log(\|\nabla f(x_0)\|/\varepsilon)$. 
	
	When the constants $\mu$ and $L$ are not available, in order to employ the above restarting strategy we will need a method that does not need the input of $L$. Unfortunately, the current results \cite{kim2021optimizing,diakonikolas2022potential,lee2021geometric} in the literature does not satisfy such requirement. However, the proposed AR method in the previous section can help us in this regard. First recall in Theorem~\ref{thm:conv_proposed_unconstrained_searchL} that for any input argument $\sigma_1>0$, the AR method outputs a point $\hat x$ with $\|\nabla f(\hat x)\|\le 5\sigma_1\|x_0 - x^*\|$. Setting $\sigma_1 = \mu/10$ and using the strong convexity of $f$, we have $\|\nabla f(\hat x)\|\le \mu\|x_0 - x^*\|/2\le\|\nabla f(x_0)\|/2$. The gradient norm is now reduced by half and we may restart the AR method. When the value of $\mu$ is not available, we may set $\sigma_1 = \tilde \mu/10$ with a guess $\tilde \mu$ and then use a guess-and-check implementation similar to Algorithm~\ref{alg:proposed_unconstrained_guess_and_check} to search the correct $\tilde\mu$. Combining the guess-and-check implementation with the aforementioned restarting scheme, we are able to obtain a parameter-free algorithm with optimal complexity. Our proposed strongly convex accumulative regularization method (SCAR) is described in Algorithm~\ref{alg:proposed_SCAR_noflag}.
	
	\begin{algorithm}[h]
		\caption{\label{alg:proposed_SCAR_noflag} A parameter-free strongly convex accumulative regularization (SCAR) algorithm}
		\begin{algorithmic}
			\Function{($\hat x$, $\hat M$) = SCAR}{$f$, $\varepsilon$, $y_0$, $\mu_0$, $M_0$}
			\For {$t=1,2,\ldots$}
			\State Set
			$
			(y_{t},M_{t}) = \text{AR}(f, y_{t-1}, \mu_{t-1}/10, M_{t-1})
			$ (see Algorithm~\ref{alg:proposed_unconstrained_searchL}).
			\State If {$\|\nabla f(y_t)\|>\|\nabla f(y_{t-1})\|/2$}, then set $\mu_t = \mu_{t-1}/4$ and $y_t = y_{t-1}$. 		
			\State If $\|\nabla f(y_{t})\|\le \varepsilon$, then \textbf{terminate} with $\hat x = y_{t}$ and $\hat M = M_t$.\EndFor
			\EndFunction
		\end{algorithmic}
	\end{algorithm}
	
	A few remarks about Algorithm~\ref{alg:proposed_SCAR_noflag} are in order. First, whenever our guess $\mu_{t-1}$ is too large and the AR method is unable to reduce the gradient norm by half, we will simply discard the computed approximate solution $y_t$ and reduce our guess to $\mu_{t-1}/4$. Second, we need some initial guess $\mu_0$ and $M_0$ on the strong convexity and smoothness constants. We will show later that both can be simply set to $\|\nabla f(y_0) - \nabla f(z_0)\|/\|y_0 - z_0\|$ for any $z_0\not = y_0$. 
	The performance of Algorithm~\ref{alg:proposed_SCAR_noflag} is described below.
	
	\vgap
	\begin{theorem}
		\label{thm:conv_proposed_unconstrained_sc_noflag}
		Suppose that $\|\nabla f(y_0)\|> \varepsilon$. For any $\varepsilon>0$, $\mu_0\ge \mu$ and $M_0\le 4L$, Algorithm~\ref{alg:proposed_SCAR_noflag} terminates with $\|\nabla f(\hat x)\|\le\varepsilon$. The total number of gradient evaluations is bounded by  $(4+8\sqrt{5}C_1)\lceil\log_4(\mu_0/\mu)\rceil\sqrt{L/\mu}  + (4+8\sqrt{5}C_1)\sqrt{L/\mu}\lceil\log_2(\|\nabla f(y_0)\|/\varepsilon)\rceil$
		.
		\cmo{
		Moreover, if we have $\mu_0\le 2^{j}\varepsilon/\|y_0-x^*\|$ for some integer $j$, an alternative bound of gradient evaluations is $(4+8\sqrt{5}C_1)\lceil\log_4(\mu_0/\mu)\rceil\sqrt{L/\mu}  +(4+C_1\sqrt{10L/\mu})\max\{1,j\}$.}
	\end{theorem}
	\begin{proof}
		We start with three observations. First, by Theorem~\ref{thm:conv_proposed_unconstrained_searchL}, 
		the output $y_{t}$ of the AR method satisfies $\|\nabla f(y_{t})\|\le \mu_{t-1}\|y_{t-1}-x^*\|/2$ and the total number of gradient evaluations for computing $y_t$ is bounded by
		$4 +  C_1\sqrt{10L/\mu_{t-1}}$.
		Second, using the previous observation and 
		the relation 
  \begin{equation}\label{eq:growth}
  \|y_{t-1}-x^*\|\le \tfrac{1}{\mu}\|\nabla f(y_{t-1})\|
  \end{equation}
  due to the strong convexity of $f$, we conclude that $\|\nabla f(y_{t})\|\le \|\nabla f(y_{t-1})\|/2$  and hence that $\mu_t$ will no longer change whenever $\mu_{t-1}\le \mu$. Therefore $\mu_t\ge \mu/4$ for all $t$.
		Third, the algorithm will terminate with $\|\nabla f(y_t)\|\le \varepsilon$ whenever $t\ge P:=\lceil\log_4(\mu_0/\mu)\rceil + \lceil\log_2(\|\nabla f(y_0)\|/\varepsilon)\rceil$. Here $P$ is the sum of two parts: $\lceil\log_4(\mu_0/\mu)\rceil$ counts the number of times when $\|\nabla f(y_t)\|>\|\nabla f(y_{t-1})\|/2$ and the value of $\mu_t$ is reduced from $\mu_{t-1}$; $\lceil\log_2(\|\nabla f(y_0)\|/\varepsilon)\rceil$ counts the number of times when $\|\nabla f(y_t)\|\le\|\nabla f(y_{t-1})\|/2$. Summarizing the above observations, the total number of gradient evaluations $N$ is bounded by
		\begin{align*}
			N \le\ & 4P + C_1P\sqrt{40L/\mu} \le (4+8\sqrt{5}C_1)P\sqrt{L/\mu}
			\\
			\le\ & (4+8\sqrt{5}C_1)\lceil\log_4(\mu_0/\mu)\rceil\sqrt{L/\mu}  + (4+8\sqrt{5}C_1)\sqrt{L/\mu}\lceil\log_2(\|\nabla f(y_0)\|/\varepsilon)\rceil.
		\end{align*}
	
	\cmo{
		We may also analyze the total number of gradient evaluations from an alternative perspective when we know that  $\mu_0\le 2^{j}\varepsilon/\operatorname{dist}(y_0,X^*)$. By   Theorem~\ref{thm:conv_proposed_unconstrained_searchL}, the first AR call is guaranteed to yield $y_1$ such that $\|\nabla f(y_1)\| \le 2^{j-1}\varepsilon$. If $j\le 1$, then $y_1$ already satisfies $\|\nabla f(y_1)\|\le\varepsilon$. If $j>1$,
		then after $(j-1)$ occurrences of $\|\nabla f(y_t)\| \le \|\nabla f(y_{t-1})\|/2$ we will have $\|\nabla f(y_t)\|\le \varepsilon$. Therefore, the algorithm will terminate with $\|\nabla f(y_t)\|\le \varepsilon$ whenever $t\ge P:=\lceil\log_4(\mu_0/\mu)\rceil + j$ and we can bound $N$ by $N \le (4+8\sqrt{5}C_1)P\sqrt{L/\mu}$ similar as the proof of the previous bound.
	}	
	
	\qedsymbol
	\end{proof}
	
	\vgap 
	
	Based on the above theorem, our proposed SCAR algorithm is able to achieve that desired optimal complexity $\cO(1)\sqrt{L/\mu}\log(\|\nabla f(x_0)\|/\varepsilon)$ with a correct selection of $\mu$. With any initial guess $\mu_0$, SCAR is a parameter-free algorithm that  achieves the optimal complexity (up to the additive term $\cO(1)\log_4(\mu_0/\mu)\sqrt{L/\mu}$) for the first time in the literature.

 \begin{remark}
 Even though we focus on strongly convex problems in this section, our development also applies to a more general class of problems for which the condition in \eqref{eq:growth} holds. This type of condition is often called error bound or sharpness condition.
 \end{remark}
	
	\section{Minimizing gradient for unconstrained nonconvex problems}
	\label{sec:NCAR}
	In this section, we still consider the problem $f^*=\min_{x\in\R^n}f(x)$, but $f$ now is nonconvex with $l$-lower curvature. Throughout the section we will assume that the values of $L$ and $l$ are available\cmo{, $0\le l\le L$, }
	and that the optimal value $f^*$ is finite. 
	We will show that one can compute an approximate solution $\hat x$ with $\|\nabla f(\hat x)\|\le\varepsilon$ after at most $\cO(1)\sqrt{Ll}(\cmo{(f(x_0) - f^*)}/\varepsilon^2)$ gradient evaluations.  A parameter-free
	version of this method will be introduced in the next section since it is
	more complicated.
	
	A proximal point algorithm can be applied to minimize the gradient. Specifically, starting with $x^0\in\R^n$, let
	\begin{equation} \label{eq:def_nonconvex_ppt}
		x^i = \argmin_{x\in \R^n} \{F_i(x):= f(x) + l\|x - x^{i-1}\|^2\}.
	\end{equation}
	Since $F_i$ is strongly convex with modulus $l$ and $\nabla F_i(x^i)=0$, 
	\begin{align*}
		F_i(x_{i-1}) &\ge F_i(x_{i}) + \langle \nabla F_i(x_i), x_{i-1} - x_i\rangle + \frac{l}{2}\|x_{i-1} - x_i\|^2
		= F_i(x_{i}) + \frac{l}{2}\|x_{i-1} - x_i\|^2, 
	\end{align*}
	which, in view of the definition of $F_i$ and the relation $  2l(x^i-x^{i-1}) = - \nabla f(x^i)$ due to $\nabla F_i(x^i)=0$, then implies
	\begin{align}
		\label{eq:PPA_key_ineq}
		f(x^{i-1}) - f(x^i) \ge 3\|\nabla f(x^i)\|^2/(8l).
	\end{align}
	Summing from $i=1,\ldots,N$, and noting that $f(x^N) \ge f^*$, we conclude that 
	\begin{align}
		\label{eq:ppa_conv}
		\min_{i=1,\ldots, N}\|\nabla f(x^i)\|^2\le {({1}/{N})\sum_{i=1}^{N}\|\nabla f(x^i)\|^2} \le {{8l(f(x^0) - f^*)/(3N)}}.
	\end{align}
	Therefore, the number of subproblems of solving $x^i$ to compute a solution $\hat x$ with $\|\nabla f(\hat x)\|\le \varepsilon$ is bounded by $\cO(l(f(x^0) - f^*)/\varepsilon^2)$. However,  subproblem \eqref{eq:def_nonconvex_ppt} has to be solved exactly in the above analysis.
	
	\begin{algorithm}[h]
		\caption{\label{alg:proposed_unconstrained_nc} The nonconvex acceleration through strongly convex accumulative regularization for gradient minimization}
		\begin{algorithmic}
			\Require Initial iterate \cmo{ $y^0$}, lower curvature constant $l$ and Lipschitz constant $L$.
			\State 
			\cmo{Denote $F_0(x):=f(x) + \tilde l \| x- y^0\|^2$ where $\tilde l\in [l, L]$ is a parameter. Compute $x^0 = \text{AR}(F_0, \varepsilon/4, y^{0}, \tilde l, L+2\tilde l)$. If $\|\nabla f(x^0)\|\le \varepsilon$, then \textbf{terminate} with $\hat x:=x^{0}$ .}
			\For{$i=1,2,\ldots, $}
			\State
			$	x^{i} = \text{SCAR}(\cmo{F_i}, \varepsilon/4, x^{i-1}, l, L+2 l)$
			 \cmo{(see description of SCAR in Algorithm~\ref{alg:proposed_SCAR_noflag} and definition of $F_i$ in \eqref{eq:def_nonconvex_ppt}).}
			\State If $\|\nabla f(x^i)\|\le \varepsilon$, then \textbf{terminate} with $\hat x:=x^{i}$ .
			\EndFor
		\end{algorithmic}
	\end{algorithm}
	
	In this section, we suggest to use the SCAR algorithm in the previous section to solve subproblem \eqref{eq:def_nonconvex_ppt} approximately. 
	Our algorithm is described in Algorithm~\ref{alg:proposed_unconstrained_nc}. 
	Note that when we call Algorithm~\ref{alg:proposed_SCAR_noflag}
	to solve \eqref{eq:def_nonconvex_ppt}, accurate estimation for the smoothness and strongly
	convex constants are provided, and the SCAR algorithm simply restarts the basic AR method in Algorithm~\ref{alg:proposed_unconstrained}.
	Also, different from the aforementioned vanilla proximal point algorithm with $\nabla F^{(i)}(x^i)=0$, in Algorithm~\ref{alg:proposed_unconstrained_nc} we  
	compute $x^i$ such that $\|\nabla F^{(i)}(x^i)\|\le \varepsilon/4$. We show that such $\varepsilon/4$ accuracy threshold allows us to establish a result that is analogous to \eqref{eq:PPA_key_ineq}.

	\vgap
	
	\begin{lemma}
		\label{lem:SCAR_subproblem}
		Suppose that $f$ is $L$-smooth. Let $l\in (0,L]$ be the lower curvature constant of $f$. For a fixed $u\in\R^n$, consider the strongly convex problem
		\begin{align}
			\bar x^*:=\argmin_{y\in\R^n}\{F(y):=f(y) + \tilde l\|y - u\|^2\},
		\end{align}
		where $\tilde l\ge l$. 
		If $x$ satisfy $\|\nabla f(x)\|>\varepsilon$ and $\|\nabla F(x)\|\le \varepsilon/4$, then we have 
		$\|\nabla f(x)\|^2 \le 10\tilde l(f(u) - f(x)).
		$
	\end{lemma}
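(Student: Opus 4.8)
The plan is to mimic the derivation of \cref{eq:PPA_key_ineq}, but with the exact minimizer replaced by the approximate solution $x$ and with careful bookkeeping of the $\varepsilon/4$ slack in $\|\nabla F(x)\|$. First I would write out $\nabla F(x) = \nabla f(x) + 2\tilde l(x - u)$, so that $2\tilde l(x - u) = \nabla F(x) - \nabla f(x)$, and hence $\|x - u\|$ and the inner product $\langle \nabla f(x), x-u\rangle$ can both be expressed in terms of $\nabla f(x)$ and the small quantity $\nabla F(x)$. Next, I would apply the lower-curvature inequality \cref{eq:mu} with the pair $(x, u)$ played in the roles of $(x,y)$, i.e.
\begin{align*}
	f(u) - f(x) - \langle \nabla f(x), u - x\rangle \ge -\frac{l}{2}\|u - x\|^2,
\end{align*}
which rearranges to $f(u) - f(x) \ge \langle \nabla f(x), u - x\rangle - \frac{l}{2}\|u - x\|^2$. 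The right-hand side is now a quadratic in the vector $u - x = -\tfrac{1}{2\tilde l}(\nabla F(x) - \nabla f(x))$, so substituting gives a lower bound on $f(u) - f(x)$ purely in terms of $\|\nabla f(x)\|^2$, $\|\nabla F(x)\|^2$, and $\langle \nabla f(x), \nabla F(x)\rangle$.

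The heart of the argument is then to show that this lower bound is at least $\frac{1}{10\tilde l}\|\nabla f(x)\|^2$. After substitution the leading term is $\frac{1}{2\tilde l}\|\nabla f(x)\|^2 - \frac{l}{8\tilde l^2}\|\nabla f(x)\|^2$, and since $\tilde l \ge l$ we have $\frac{l}{8\tilde l^2} \le \frac{1}{8\tilde l}$, so the "clean" part is at least $\bigl(\tfrac12 - \tfrac18\bigr)\frac{1}{\tilde l}\|\nabla f(x)\|^2 = \frac{3}{8\tilde l}\|\nabla f(x)\|^2$, exactly as in \cref{eq:PPA_key_ineq}. The cross terms and the $\|\nabla F(x)\|^2$ terms are the error to control: each of them carries at least one factor of $\|\nabla F(x)\| \le \varepsilon/4$, and we are allowed to use $\|\nabla f(x)\| > \varepsilon$, hence $\|\nabla F(x)\| \le \tfrac14\|\nabla f(x)\|$. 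Bounding $|\langle \nabla f(x), \nabla F(x)\rangle| \le \|\nabla f(x)\|\,\|\nabla F(x)\| \le \tfrac14\|\nabla f(x)\|^2$ (Cauchy–Schwarz) and $\|\nabla F(x)\|^2 \le \tfrac{1}{16}\|\nabla f(x)\|^2$, all error terms become constant multiples of $\frac{1}{\tilde l}\|\nabla f(x)\|^2$ with small coefficients; collecting them, the net coefficient should still exceed $\tfrac{1}{10}$, which yields the claim.

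I expect the main obstacle to be purely arithmetic: getting the constants to line up so that the final coefficient is $\ge \tfrac{1}{10}$ rather than something slightly worse. This requires being careful about whether to use $\tilde l \ge l$ at the right moments (using it too aggressively in the error terms is fine, but the clean term needs $\tilde l \ge l$ used in the direction $\frac{l}{\tilde l^2} \le \frac{1}{\tilde l}$), and about the sign of the cross term — one should take absolute values and accept the worst case. A clean way to organize this is to set $t := \|\nabla f(x)\|$ and $r := \|\nabla F(x)\| \le t/4$, write everything as $\frac{1}{\tilde l}$ times a quadratic form in $t$ and $r$, and then minimize over $r \in [0, t/4]$ and over the sign of the cross term; the resulting worst-case coefficient of $\frac{t^2}{\tilde l}$ is what must be checked to be at least $\tfrac{1}{10}$. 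No deeper idea is needed beyond \cref{eq:mu}, the optimality-type identity $\nabla F(x) = \nabla f(x) + 2\tilde l(x-u)$, and Cauchy–Schwarz.
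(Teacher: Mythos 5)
Your proof is correct, and it takes a genuinely different route from the paper's. Both proofs start from the identity $\nabla F(x) = \nabla f(x) + 2\tilde l(x-u)$, and both rely crucially on the consequence $\|\nabla F(x)\| \le \tfrac14\|\nabla f(x)\|$ of the two hypotheses. From there, however, the paper factors through the $(2\tilde l - l)$-strong convexity of $F$: it bounds $F(x) - F(u) \le F(x) - F(x^*) \le \|\nabla F(x)\|^2/(2(2\tilde l - l))$, combines this with $\|\nabla F(x)\| < \tfrac{2\tilde l}{3}\|x-u\|$ to obtain $f(u) - f(x) \ge \tfrac{7\tilde l}{9}\|x-u\|^2$, and then separately bounds $\|\nabla f(x)\| \le \tfrac{8\tilde l}{3}\|x-u\|$, giving the constant $64/7 < 10$. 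You instead apply the lower-curvature inequality \cref{eq:mu} directly at the pair $(x,u)$ and substitute the exact expression $u - x = \tfrac{1}{2\tilde l}(\nabla f(x) - \nabla F(x))$, which turns the right-hand side into an explicit quadratic form in $t := \|\nabla f(x)\|$, $r := \|\nabla F(x)\|$, and $\rho := \langle \nabla f(x), \nabla F(x)\rangle$; after using $l \le \tilde l$, $r \le t/4$, and $|\rho| \le tr$, the worst-case coefficient works out to $\tfrac{1}{2} - \tfrac{1}{8} - \tfrac{1}{8} - \tfrac{1}{128} = \tfrac{31}{128}$, i.e., $\|\nabla f(x)\|^2 \le \tfrac{128}{31}\tilde l(f(u) - f(x))$, comfortably below the claimed $10\tilde l$. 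Your route is somewhat more elementary — it never invokes the Polyak-type inequality $F(x) - F(x^*) \le \|\nabla F(x)\|^2/(2\mu_F)$ — and in fact yields a slightly sharper constant; the paper's argument, by contrast, isolates $\|x-u\|$ as an intermediate quantity and relates both $\|\nabla f(x)\|$ and $f(u)-f(x)$ to it, which makes the parallel with the exact proximal-point estimate \cref{eq:PPA_key_ineq} more visible. Both are sound; you should just be sure to carry out the arithmetic you sketched (your outline double-counts the cross term in one place, but the correct tally is still well above $1/10$).
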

	\begin{proof}
		By $\nabla F(x) = \nabla f(x) + 2\tilde l(x-u)$,
		$\|\nabla f(x)\|>\varepsilon$ and $\|\nabla F(x)\|\le \varepsilon/4$, 
		\begin{align}
			\label{tmp:eq:gradF_bound}
			2\tilde l\|x -u \|\ge \|\nabla f(x)\| - \|\nabla F(x)\|>3\varepsilon/4\ge 3\|\nabla F(x)\|.
		\end{align}
		As a direct consequence of the above observation, we have
		\begin{align}	\label{tmp:eq:gradf_bound}
			\|\nabla f(x)\| = \|\nabla F(x) - 2\tilde l(x-u)\| \le \|\nabla F(x)\| + 2\tilde l\|x - u\| \le (8\tilde l/3)\|x-u\|.
		\end{align}
		Also, it follows from $F(u) \ge F(\bar x^*)$,
		the strong convexity of $F$ (modulus $2\tilde l-l$), and
		the bound on $\|\nabla F(x)\|$ in \eqref{tmp:eq:gradF_bound} that 
		$$F(x) - F(u)\le F(x) - F(x^*) \le \|\nabla F(x)\|^2/(2(2\tilde l - l))<(2\tilde l/9)\|x - u\|^2
		, $$
		or equivalently, $f(x) - f(u) \le -(7\tilde l/9)\|x - u\|^2$. Combining this result and \eqref{tmp:eq:gradf_bound} we have
		$\|\nabla f(x)\|^2 \le 10\tilde l(f(u) - f(x)).$ 
 
	\qedsymbol\end{proof}
	
	\vgap
	
	Applying Lemma~\ref{lem:SCAR_subproblem} to subproblem \eqref{eq:def_nonconvex_ppt}, we have $\|\nabla f(x^i)\|^2 \le 10\cmo{\tilde l}(f(x^{i-1}) - f(x^i))$, a result that is analogous to \eqref{eq:PPA_key_ineq} in our previous analysis of the vanilla proximal point algorithm. Therefore, we are now able to obtain a convergence result that is similar to \eqref{eq:ppa_conv}. It remains to estimate the total number of gradient evaluations. We need the following technical lemma for the analysis.
	
	\vgap
	\begin{lemma}
		\label{lem:tech_nc}
		For any $\Delta>0$, $\varepsilon>0$, and any \cmo{vector $c=(c_1,\ldots,c_N)\in\R^N$ such that $c_i>0$ for all $i=1,\ldots,N$}, if the optimization problem
		\begin{align} \label{eq:aux_problem}
			H(\Delta,\varepsilon, c):=& \max_{y_1,\ldots,y_N\in\R} 
			\left\{\sum_{i=1}^{N}\frac{1}{c_i}\log_2\frac{y_i}{\varepsilon}
			: \ \sum_{i=1}^{N}\frac{y_i^2}{c_i^2}\le \Delta;\ y_i\ge \varepsilon,\ \forall i\right\},
		\end{align}
		is feasible, then $H(\Delta,\varepsilon, c)\le \|c\|_\infty\Delta/\varepsilon^2$.
	\end{lemma}
	\begin{proof}
		By Lagrange duality, we have $H(\Delta,\varepsilon, c)\le d(\lambda)/(\ln 2)$ for all $\lambda>0$, where
		\begin{align*}
			d(\lambda):=& \max_{y_1,\ldots,y_N\in\R} \sum_{i=1}^{N}\frac{1}{c_i}\ln\frac{y_i}{\varepsilon} + \lambda\left(\Delta - \sum_{i=1}^{N}\frac{y_i^2}{c_i^2}\right)
			\ \st \ y_i\ge \varepsilon,\ \forall i=1,\ldots,N.
		\end{align*}
		Note that the above Lagrange relaxation problem has an optimal solution $y_i^*(\lambda) =  \max\{\varepsilon, \sqrt{c_i/(2\lambda)}\}$ for all $i=1,\ldots,N$.
		Observe that the feasibility of \eqref{eq:aux_problem} implies that $\sum_{i=1}^{N}\varepsilon^2/c_i^2\le \Delta$.
		This observation, by the intermediate value theorem (by tending $\lambda$ to $0$ and $+\infty$ in $\sum_{i=1}^{N}(y_i^*(\lambda)/c_i)^2$ ), implies that there exists $\overline{\lambda}>0$ s.t.
		$\sum_{i=1}^{N}(y_i^*(\overline{\lambda})/c_i)^2 = \Delta$. Using the fact that $\ln t\cmo{<} (1/2)t^2$ for all $t>0$, we have
		\begin{align*}
			H(\Delta,\varepsilon,c)\le \frac{d(\overline{\lambda})}{\ln 2} \le  \frac{\|c\|_\infty}{\ln 2}\sum_{i=1}^{N}\frac{1}{c_i^2}\ln\frac{y_i^*(\overline{\lambda})}{\varepsilon} \le \frac{\|c\|_\infty}{2\ln 2}\sum_{i=1}^{N}\left(\frac{y_i^*(\overline{\lambda})}{c_i\varepsilon}\right)^2 \le \frac{\|c\|_\infty\Delta}{\varepsilon^2}.
		\end{align*}
  
	\qedsymbol\end{proof}
	
	\vgap
	
	We are now ready to present the convergence properties of Algorithm~\ref{alg:proposed_unconstrained_nc}.
	
	\vgap
	
	\begin{theorem}
		\label{thm:conv_nc}
		Suppose that $\|\nabla f(\cmo{y}^0)\|>\varepsilon$. 
		Algorithm~\ref{alg:proposed_unconstrained_nc} terminates with an solution $\hat x$ such that $\|\nabla f(\hat x)\|\le\varepsilon$. The total number of gradient evaluations required by the algorithm is bounded by \cmo{$4 + 3\sqrt{\max\{(L+2\tilde l)/2, 2L\}/\sigma_1}+16\sqrt{2c_\cA }\sqrt{L/\tilde l} + 30\sqrt{3}(4+16\sqrt{5}C_1)\sqrt{L l}(f(y^0) - f(x^*))/\varepsilon^2$}, where $C_1$ is a universal constant defined in Theorem~\ref{thm:conv_proposed_unconstrained_searchL}.
		\cmo{
		Moreover, if $\tilde l = \max\{l, \varepsilon/\operatorname{dist}(y^0, X^*)\}$, then the total number of gradient evaluations is bounded by $4 + (3\sqrt{2} + 16\sqrt{2c_\cA })\sqrt{L\operatorname{dist}(y^0, X^*) / \varepsilon} + 30\sqrt{3}(4+16\sqrt{5}C_1)\sqrt{L l}(f(y^0) - f(x^*))/\varepsilon^2$.
  	}
		
	\end{theorem}
	\begin{proof}
		\cmo{ Let us start with the assumption that $\|\nabla f(x^0)\|>\varepsilon$ after $x^0$ is computed from the AR call. }
		Applying Lemma~\ref{lem:SCAR_subproblem} to subproblem \eqref{eq:def_nonconvex_ppt}, we have the result $\|\nabla f(x^i)\|^2 \le 10l(f(x^{i-1}) - f(x^i))$ for all $i$. Taking telescopic sum of these relations, we have
		\begin{align}
			\label{eq:nc_contradiction}
			N \min_{i=\cmo{ 0,\ldots,N-1}}\|\nabla f(x^i)\|^2 \le \sum_{i=\cmo{0}}^{\cmo{N-1}}\|\nabla f(x^i)\|^2 \le {10{l}(f(\cmo{y}^0) - f(x^*))}.
		\end{align}
		Consequently, Algorithm~\ref{alg:proposed_unconstrained_nc}
		is guaranteed to terminate within $\bar N := \lceil 10{l}(f(\cmo{y}^0) - f(x^*))/\varepsilon^2\rceil$ calls to the SCAR function. 
		It remains to estimate the total number of gradient evaluations.  Let $N(\varepsilon)\le \bar N$ be the smallest $i$ such that $\|\nabla f(x^i)\|\le \varepsilon$. Clearly by \eqref{eq:nc_contradiction}, $\sum_{i=\cmo{1}}^{N(\varepsilon)}\|\nabla f(x^{i-1})\|^2\le 10 l (f(\cmo{y}^0) -f(x^*))$. Applying Lemma~\ref{lem:tech_nc} (with $\Delta = 10 l(f(\cmo{y}^0) -f(x^*))$, $N = N(\varepsilon)-1$, and $c_i\equiv 1$), we have
		\begin{align}
			\label{eq:tmp1}
			\sum_{i=\cmo{0}}^{N(\varepsilon)-1}\log_2(\|\nabla f(x^{i})\|/\varepsilon) \le 10 l (f(\cmo{y}^0) - f(x^*))/\varepsilon^2.
		\end{align}
			Using the above relation and 		Theorem~\ref{thm:conv_proposed_unconstrained_sc_noflag} (with $M_0=L+2{l}$ and $\mu_0 = \mu = {l}$), 
	we can bound the total number of gradient evaluations of $\nabla f$ by 
	
	\begin{align*}
		& \sum_{i=1}^{N(\varepsilon)} (4+8\sqrt{5}C_1)\sqrt{(L+2{l})/{ l}}\log_2(4\|\nabla f(x^{i-1})\|/\varepsilon)  
		\le 
		30\sqrt{3}(4+8\sqrt{5}C_1)\sqrt{L l}(f(\cmo{y}^0) - f(x^*))/\varepsilon^2.
	\end{align*}

		\cmo{
		To finish the proof it suffices to estimate the total number of evaluations in the AR call for computing $x^0$. Such estimate will also resolve the case when $\|\nabla f(x^0)\|\le\varepsilon$.
		Applying Theorem \ref{thm:conv_proposed_unconstrained_searchL} such number is bounded by 
		 $4 + 3\sqrt{\max\{(L+\tilde 2l)/2, 2L\}/\tilde l}+16\sqrt{2c_\cA }\sqrt{L/\tilde l}$. 
		Moreover, note that if $\tilde l = \max\{l, \varepsilon/\operatorname{dist}(y^0, X^*)\}$, then such number is further bounded by $4 + (3\sqrt{2} + 16\sqrt{2c_\cA })\sqrt{L\operatorname{dist}(y^0, X^*) / \varepsilon}$.
		}
	\qedsymbol\end{proof}
	
	\vgap
	
	In the above result we obtain $\cO(1)\sqrt{Ll}(f(\cmo{y}^0) - f(x^*))/\varepsilon^2$ complexity for nonconvex gradient minimization with $l$-lower curvature constant.
	To the best of our knowledge, this is the first time such complexity is achieved in the literature. 
	
	\section{Parameter-free algorithm for \cmo{ smooth} problems}
	\label{sec:NCAR_PF}
	Algorithm~\ref{alg:proposed_unconstrained_nc} requires the knowledge of $l$ and $L$. While the requirement of $L$ may be relaxed easily through backtracking,  the knowledge of $l$ is critical. In particular, if we supply Algorithm~\ref{alg:proposed_unconstrained_nc} with an estimate $\tilde l<l$, 
	we may not be able to even guarantee the convexity of $F_i(x) :=f(x) + \tilde l \|x - x^{i-1}\|^2$. 
	In this section, we will first present a variant of the strongly convex accumulative regularization algorithm, referred to as SCAR-PM, that can handle plausible estimate of the strong convexity modulus of a smooth function. 
	Given any smooth function $f$ and a guess $\tilde\mu$ of the strong convexity modulus, SCAR-PM will either compute an approximate solution $\hat x$ with $\|\nabla f(\hat x)\|\le \varepsilon$, or return an error that $f$ is not $\tilde\mu$-strongly convex. With the help of SCAR-PM, we will then develop a nonconvex acceleration algorithm for solving the gradient minimization problem without any problem parameters. \cmo{ Our proposed algorithm is applicable to strongly convex, convex, and nonconvex problems without requiring any knowledge of the problem's convexity status.}
	
	\subsection{Handling plausible strong convexity information}
	The SCAR method in Algorithm~\ref{alg:proposed_SCAR_noflag} relies on restarting our AR algorithm for gradient minimization of convex smooth functions. The AR algorithm then calls subroutine $\cA$ for solving its subproblems. In order to handle plausible strong convexity information and possibly nonconvex objective function, we need to modify the assumption on the subroutine $\cA$. Specifically, we will assume the following convergence properties of $\cA$:
	
	\vgap
	\begin{assumption}
		\label{assum:A_unconstrained_searchL_pm}
		The subroutine $(x_s, L_s) =\cA(f, \sigma_s, \xu_s, x_{s-1})$ is able to produce an estimate $L_s^k$ of Lipschitz constant $L$ such that $L_s^k\le c_\cA L$ after its $k$-th gradient evaluation of $\nabla f$, where $c_\cA$ is a universal constant that depends on algorithm $\cA$. Moreover, if $f$ is convex, then $\cA$ has performance guarantee \eqref{eq:A_cond_unconstrained_searchL} after the $k$-th gradient evaluation of $\nabla f$ for the computed approximate solution $x_s^k$. 
		Subroutine $\cA$ will terminate whenever $k\ge 8\sqrt{2L_s^k/\sigma_s}$ and output the approximate solution $x_s = x_s^k$.
	\end{assumption}
	\vgap
	
	The main difference between the above assumption and the previous Assumption~\ref{assum:A_unconstrained_searchL} is that we now allow the input function $f$ of subroutine $\cA$ to be nonconvex. Note that we do not require $\cA$ to find out whether $f$ is convex.  We only need to ensure the convergence properties in \eqref{eq:A_cond_unconstrained_searchL} hold if $f$ is convex. For nonconvex $f$, we do not have any requirement on performance guarantees except that an estimate $L_s^k$ of Lipschitz constant can be computed to verify the termination criterion. It should be noted that the potential nonconvexity of $f$ will not impact the termination of either subroutine $\cA$ or the AR algorithm, because $\cA$ will terminate based on the number of iterations (i.e., whenever $k\ge 8\sqrt{2L_s^k/\sigma_s}$) and AR will  terminate whenever $\sigma_s\ge M_s$. Since $L_s^k$ and $M_s$ are both lower estimates of $L$, both termination conditions will be satisfied after a finite number of iterations.

	\begin{algorithm}[h]
		\caption{\label{alg:proposed_SCAR} The  strongly convex accumulative regularization algorithm with plausible strong convexity modulus (SCAR-PM)}
		\begin{algorithmic}
			\Function{$(\hat x, \hat M, \text{ERROR})$=SCAR-PM}{$f$, $\varepsilon$, $y_0$, $\tilde\mu$, $M_0$}
			\For {$t=1,2,\ldots$}
			\State Set 
			$(y_{t},M_{t}) = \text{AR}(f, y_{t-1}, \tilde\mu/10, M_{t-1})$ (see Algorithm~\ref{alg:proposed_unconstrained_searchL}). 
			\State If $\|\nabla f(y_{t})\|\le \varepsilon$, then \textbf{terminate} with $\hat x = y_{t}$, $\hat M=M_t$ and ERROR$=$FALSE.
			\State If {$\|\nabla f(y_t)\|>\|\nabla f(y_{t-1})\|/2$}, then \textbf{terminate} with $\hat x = y_0$, $\hat M = M_t$, and ERROR$=$TRUE.
			\EndFor
			\EndFunction
		\end{algorithmic}
	\end{algorithm}
	
	We are now ready to describe the variant of strongly convex accumulative regularization algorithm with plausible strong convexity modulus (SCAR-PM).
	With a supplied guess $\tilde\mu$ of the strong convexity modulus, SCAR-PM will either return ERROR$=$FALSE with a solution $\hat x$ such that $\|\nabla f(\hat x)\|\le \varepsilon$, or throw an error by returning ERROR$=$TRUE. The latter case is discovered whenever the AR method terminates but the gradient norm of $f$ is not reduced by half comparing with the previously computed gradient norm. This could happen if $f$ is $\mu$-strongly convex with  $\mu\le \tilde \mu$, convex but not strongly convex, or nonconvex. The following theorem describes the performance of the SCAR-PM method.
	
	\vgap
	\begin{theorem}
		\label{thm:conv_proposed_unconstrained_sc}
		Assume that $\|\nabla f(y_0)\|>\varepsilon$ and $M_0\le 4L$.
		Algorithm~\ref{alg:proposed_SCAR} will either find an approximate solution $\hat x$ with $\|\nabla f(\hat x)\|\le\varepsilon$ or report that $f$ is not $\tilde\mu$-strongly convex by returning ERROR$=$TRUE. It terminates after at most $(4+C_1\sqrt{10L/\tilde\mu})\lceil\log_2(\|\nabla f(y_0)\|/\varepsilon)\rceil$ gradient evaluations, where $C_1$ is a universal constant defined in Theorem~\ref{thm:conv_proposed_unconstrained_searchL}. 
		\cmo{ Moreover, if $\tilde\mu\le  2^{j}\varepsilon/\operatorname{dist}(y_0,X^*)$ for some integer $j$, an alternative bound of gradient evaluations is $(4+C_1\sqrt{10L/\tilde\mu})\max\{1,j\}$.}
	\end{theorem}
	\begin{proof}
		Observe that under Assumption~\ref{assum:A_unconstrained_searchL_pm}, the gradient evaluation complexity analysis of Theorem~\ref{thm:conv_proposed_unconstrained_searchL} do not rely on the convexity of $f$; it remains the same for nonconvex $f$. Therefore, the total number of gradient evaluations for computing $y_t$ is bounded by
		$4 + C_1\sqrt{10L/\tilde \mu}$, where $C_1:= \sqrt{2}(3 + 16\sqrt{2c_\cA})$. If $f$ is $\mu$-strongly convex and $\tilde\mu\le \mu$, then by Theorem~\ref{thm:conv_proposed_unconstrained_searchL} we have $\|\nabla f(y_t)\|\le \tilde\mu \operatorname{dist}(\cmo{y_{t-1}}, X^*)/2\le \|\nabla f(y_{t-1})\|/2$. Equivalently, whenever the condition is violated, i.e., $\|\nabla f(y_t)\|>\|\nabla f(y_{t-1})\|/2$, the SCAR-PM method will correctly report that $f$ is not $\tilde\mu$-strongly convex by terminating and returning ERROR$=$TRUE.  
		Moreover, whenever $t\ge T:=\lceil\log_2(\|\nabla f(y_0)\|/\varepsilon)\rceil$, we will have $\|\nabla f(y_t)\|\le\varepsilon$ and the SCAR-PM method will terminate with ERROR$=$FALSE. The total number of gradient evaluations $N$ can now be bounded by 
		\begin{align*}
			N\le\ & 4T + C_1\sqrt{10}T\sqrt{L/\tilde \mu} 
			\\
			= & (4+C_1\sqrt{10L/\tilde\mu})\lceil\log_2(\|\nabla f(y_0)\|/\varepsilon)\rceil
			.
		\end{align*}
		Note that the above bound on total number of gradient evaluations is valid regardless of whether the returned ERROR is either TRUE or FALSE. 

        \cmo{
        We may also analyze the total number of gradient evaluations from an alternative perspective when we know that  $\tilde\mu\le 2^{j}\varepsilon/\operatorname{dist}(y_0,X^*)$. By   Theorem~\ref{thm:conv_proposed_unconstrained_searchL}, the first AR call is guaranteed to yield $y_1$ such that $\|\nabla f(y_1)\| \le 2^{j-1}\varepsilon$. If $j\le 1$, then $y_1$ already satisfies $\|\nabla f(y_1)\|\le\varepsilon$. If $j>1$ and SCAR-PM terminates with ERROR$=$FALSE, we have $\|\nabla f(y_t)\| \le \|\nabla (y_{t-1})\|/2$ and hence $\|\nabla f(y_t)\|\le\varepsilon$ whenever $t\ge j$. Therefore, the total number of gradient evaluations $N$ can now be bounded by $
        	N\le\  (4+C_1\sqrt{10L/\tilde\mu})\max\{1,j\}$.
        Similar as the proof of the previous bound, the above bound on total number of gradient evaluations is valid regardless of whether the returned ERROR is either TRUE or FALSE. 
        }
	\qedsymbol\end{proof}

\vgap

	\vgap
	
	\subsection{The NASCAR algorithm for \cmo{ smooth} problems}
	With the help of SCAR-PM in the previous subsection, we are now ready to design a parameter-free nonconvex acceleration through strongly convex accumulative regularization (NASCAR) method as shown in Algorithm~\ref{alg:proposed_unconstrained_nc_searchmu}.

	Each iteration of NASCAR computes a solution $x^i$ s.t. $\|\nabla F^{i}(x^i)\|\le \varepsilon/4$, where $F^{(i)}(x):=f(x) + l_i\|x - x^{i-1}\|^2$ and $l_i$ is our estimate of the lower curvature constant $l$. Note that $F^{(i)}$ is strongly convex if $l_i\ge l$, but our guess of $l_i$ might be incorrect with $l_i< l$ and thus $F^{(i)}$ might not be convex. In the latter case, the SCAR-PM algorithm used in NASCAR would throw an error with ERROR$=$TRUE after discovering the supplied strong convexity constant is wrong, indicating that our guess $l_i$ is incorrect with $l_i<l$. However, it should be noted that the SCAR-PM algorithm may produce $x^i$ with $\|\nabla F^{i}(x^i)\|\le \varepsilon/4$ successfully before discovering that the supplied strong convexity modolus is wrong. In such case, we could also confirm that $l_i<l$ through Lemma~\ref{lem:SCAR_subproblem}. Specifically, by Lemma~\ref{lem:SCAR_subproblem}, $\|\nabla F^{(i)}(x^{i-1})\|\le\varepsilon/4$ implies that $\|\nabla f(x^i)\|^2\le 10l(f(x^{i-1}) - f(x^i))$ when $l_i\ge l$. Equivalently, whenever $\|\nabla F^{(i)}(x^i)\|\le\varepsilon/4$ but $\|\nabla f(x^i)\|^2> 10\tilde l(f(x^{i-1}) - f(x^i))$, we must have our guess $l_i<l$. Therefore, NASCAR could discover that $l_i<l$ and increase it whenever either ERROR$=$TRUE or $\|\nabla f(x^i)\|^2\le 10l_i(f(x^{i-1}) - f(x^i))$. 
	In Theorem~\ref{thm:NASCAR_conv}, we state the performance of the NASCAR algorithm.

	\begin{algorithm}[h]
		\caption{\label{alg:proposed_unconstrained_nc_searchmu} Nonconvex acceleration through strongly convex accumulative regularization (NASCAR) for gradient minimization}
		\begin{algorithmic}
			\Function{$\hat x =\ $NASCAR}{$x^0$, $\varepsilon$, $l_0$, $M_0$}
			\For{$i=1,\ldots, $}
			\State Set $F^{(i)}(x):=f(x) + l_{i-1}\|x - x^{i-1}\|^2$.
			\State Set ($x^{i}$, $M_{i}$, ERROR$_i$) = SCAR-PM($F^{(i)}$, $\varepsilon/4$, $x^{i-1}$, $l_{i-1}$, $M_{i-1}$).
			(see  Algorithm~\ref{alg:proposed_SCAR}).
			\State If $\|\nabla f(x^i)\|\le \varepsilon$, then \textbf{terminate} with $\hat x:=x^{i}$. 
			\State If {ERROR$_i=$TRUE} or \cmo{ $\|\nabla f(x^{i})\|^2 > 10l_{i-1}(f(x^{i-1}) - f(x^{i}))$, then set $l_i=4*l_{i-1}$ and $x^i=x^{i-1}$ respectively.}
			\EndFor			
			\EndFunction
		\end{algorithmic}
	\end{algorithm}

	\vgap
	
	\begin{theorem}
		\label{thm:NASCAR_conv}
		Supppose that the input arguments of NASCAR satisfy $\|\nabla f(x^0)\|> \varepsilon$, $l_0\le l$ and $M_0\le 4L$. 
		The NASCAR algorithm will output an approximate solution $\hat x$ such that $\|\nabla f(\hat x)\|\le\varepsilon$. The total number of gradient evaluations of $\nabla f$ required by the NASCAR algorithm is bounded by $8\sqrt{10}C_1\sqrt{L/M_{0}}\log_2(4\|\nabla f(x^{0})\|/\varepsilon) + 160\sqrt{10}C_1\sqrt{Ll}(f(x^0) - f(x^*))/\varepsilon^2$, where $C_1$ is the universal constant described in Theorem~\ref{thm:conv_proposed_unconstrained_searchL}.
	\end{theorem}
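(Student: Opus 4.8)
The plan is to run the argument of \cref{thm:conv_nc} for the parameter‑known \cref{alg:proposed_unconstrained_nc}, but with each exact/SCAR subproblem solve replaced by the inner \texttt{Repeat} loop of \cref{alg:proposed_unconstrained_nc_searchmu} that searches for a usable estimate $l_{i,p}$. First note that, as in \cref{thm:conv_nc}, the hypothesis should be read as $\|\nabla f(x^0)\|>\varepsilon$, since otherwise $x^0$ is already an acceptable output. The cornerstone is that, once outer iteration $i$ completes, the \emph{conditions to proceed} give $\|\nabla f(x^i)\|^2\le 10 l_i\big(f(x^{i-1})-f(x^i)\big)$ \emph{by construction} — this is the second stopping test of the inner loop, so it holds for free rather than via \cref{lem:SCAR_subproblem}. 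Since $l_i=4^{j_i}l_{i-1}\ge l_{i-1}$ in the main phase (and $l_1$ is fixed after initialization), $\{l_i\}_{i\ge1}$ is nondecreasing; writing $l_\infty:=\sup_i l_i$, summing the displayed inequality and using $f(x^N)\ge f(x^*)$ bounds $\sum_{i=1}^{N}\|\nabla f(x^i)\|^2\le 10 l_\infty\big(f(x^0)-f(x^*)\big)$, which, exactly as in \cref{eq:nc_contradiction}, bounds the number $N(\varepsilon)$ of outer iterations until $\|\nabla f(\hat x)\|\le\varepsilon$.

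Next I would control the estimates $l_{i,p},M_{i,p}$ and show every inner loop terminates. The structural facts are: $F^{(i,p)}(x)=f(x)+l_{i,p}\|x-x^{i-1}\|^2$ is $(2l_{i,p}-l)$‑strongly convex (hence $l_{i,p}$‑strongly convex once $l_{i,p}\ge l$) with $(L+2l_{i,p})$‑Lipschitz gradient; and when $l_{i,p}\ge l$, \cref{thm:conv_proposed_unconstrained_sc} Part~\ref{thm:SCAR_conv_tmu}, applied to $F^{(i,p)}$ with tolerance $\varepsilon/4$, strong‑convexity guess $\mu_0=l_{i,p}$ and Lipschitz guess $M_{i,p-1}$, returns FLAG$=$TRUE with $\|\nabla F^{(i,p)}(x^{i,p})\|\le\varepsilon/4$, whereupon \cref{lem:SCAR_subproblem} (with $\tilde l=l_{i,p}\ge l$, $u=x^{i-1}$) yields $\|\nabla f(x^{i,p})\|^2\le 10 l_{i,p}\big(f(x^{i-1})-f(x^{i,p})\big)$, so both conditions to proceed hold. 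Consequently $j_i$ is finite; moreover if the loop incremented past some $l_{i,p}$ then that $l_{i,p}<l$ (else it would have stopped there), which I would turn into the clean bound $l_i<4l$, hence $l_\infty\le 4l$, for every $i$, while $l_{i,p}\ge l_{i-1}\ge l_1$ throughout a given outer step. One also checks, by threading the line‑search estimates through consecutive SCAR calls, that $\{M_{i,p}\}$ is nondecreasing with $l_{i,p}\le M_{i,p-1}$, $M_{i,p-1}\le L+2l_{i,p}$, and $M_{i,p}\le 6L$, so the hypothesis $\mu_0\le M_0\le L_{F^{(i,p)}}$ of \cref{thm:conv_proposed_unconstrained_sc} Part~\ref{thm:SCAR_conv_tmu} holds at every call; the possibly nonconvex regime $l_{i,p}<l$ is covered because that theorem's count is valid regardless of the returned FLAG and SCAR terminates on nonconvex inputs as well (cf.\ the remarks after \cref{thm:conv_proposed_unconstrained_searchL}).

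I would then assemble the gradient count. By \cref{thm:conv_proposed_unconstrained_sc} Part~\ref{thm:SCAR_conv_tmu}, the call at $(i,p)$ costs at most $6+\log_2\!\big(M_{i,p}/M_{i,p-1}\big)+\big(4+2C_1\sqrt{10 M_{i,p}/l_{i,p}}\big)\log_2\!\big(4\|\nabla f(x^{i-1})\|/\varepsilon\big)$, using $\nabla F^{(i,p)}(x^{i-1})=\nabla f(x^{i-1})$. The $\log_2(M_{i,p}/M_{i,p-1})$ pieces telescope over \emph{all} calls to $\log_2(M_{\mathrm{final}}/M_0)\le\log_2(6L/M_0)$. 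For a fixed outer step $i\ge2$, the values $l_{i,p}=4^{p}l_{i-1}$ double along the loop, so $\sqrt{M_{i,p}/l_{i,p}}$ forms a geometric series, and summing over the at most $j_i+O(1)$ inner calls gives an outer‑step cost of order $|j_i|+O(1)+C_1\sqrt{L/l_{i-1}}\log_2\!\big(\|\nabla f(x^{i-1})\|/\varepsilon\big)$. To sum these over $i$, the genuinely new ingredient relative to \cref{thm:conv_nc} is to invoke \cref{lem:tech_nc} with \emph{non‑constant} weights $c_i\propto\sqrt{l_{i-1}/L}$ and $\Delta=O\!\big(L(f(x^0)-f(x^*))\big)$: the constraint $\sum_i\|\nabla f(x^{i-1})\|^2/l_{i-1}\le 10(f(x^0)-f(x^*))$ holds by the proximal‑descent inequality of the first paragraph, and $\|c\|_\infty\le 2\sqrt{l/L}$ by $l_\infty\le 4l$, so $\sum_i C_1\sqrt{L/l_{i-1}}\log_2\!\big(\|\nabla f(x^{i-1})\|/\varepsilon\big)=O\!\big(C_1\sqrt{Ll}(f(x^0)-f(x^*))/\varepsilon^2\big)$. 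The initialization search over $l_{1,p}=4^{p}M_0$ (both the upward branch and the downward branch taken when $M_0$ overshoots $l$) contributes the additive $12|j_1|$ together with another geometric sum of the $\sqrt{M/l_{1,p}}$‑costs, which I would bound by $O\!\big(C_1\sqrt{L/M_0}\log_2(\|\nabla f(x^0)\|/\varepsilon)\big)$; adding the telescoped $M$‑term, the $O(N(\varepsilon))$ from the ``$6$'' and the ``$4$'' per call, and collecting constants reproduces the stated bound.

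The step I expect to be the main obstacle is the bookkeeping across the three nested loops — outer $i$, inner $p$, and the restarts/line‑searches buried inside each SCAR call. Concretely: verifying that the threaded Lipschitz/strong‑convexity estimates keep $\mu_0\le M_0\le L_{F^{(i,p)}}$ valid at every invocation of \cref{thm:conv_proposed_unconstrained_sc} Part~\ref{thm:SCAR_conv_tmu}; handling the nonconvex regime $l_{i,p}<l$ (where $F^{(i,p)}$ need not be strongly convex and SCAR may legitimately return FLAG$=$FALSE) without breaking termination or the complexity count; pinning down the boundedness of $l_1$ (hence of $l_\infty\le 4l$) from the downward initialization search; and organizing all the geometric sums over $p$ so that no factor of the number of SCAR calls leaks into any $\sqrt{\cdot}$‑term. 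The \cref{lem:tech_nc} application with weighted $c_i$ is the conceptually load‑bearing computation, but it is short once the per‑outer‑step cost has been put in the form $C_1\sqrt{L/l_{i-1}}\log_2(\|\nabla f(x^{i-1})\|/\varepsilon)$.
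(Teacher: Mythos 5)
Your proposal follows essentially the same route as the paper's proof: the same observations that $l_i\le 4l$ and $M_i\le 6L$, the same termination argument via the ``conditions to proceed'' and \cref{eq:nc_contradiction}, the same per-outer-iteration cost decomposition (casework on $j_1$ plus geometric sums in $p$ driven by \cref{thm:conv_proposed_unconstrained_sc} Part~\ref{thm:SCAR_conv_tmu}, with the LocalLineSearch overheads telescoping to $\log_2(6L/M_0)$), and the same decisive invocation of \cref{lem:tech_nc} with non-constant weights $c_i$ proportional to $\sqrt{l_i}$ — your normalization $c_i\propto\sqrt{l_{i-1}/L}$, $\Delta=O(L(f(x^0)-f(x^*)))$ is just a rescaling of the paper's choice $c_i=\sqrt{l_i}$, $\Delta=10(f(x^0)-f(x^*))$ with the $\sqrt{L}$ factored out afterward. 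The ``obstacles'' you flag at the end (threading the Lipschitz/strong-convexity estimates so the hypothesis of Part~\ref{thm:SCAR_conv_tmu} is met at every call, the downward initialization branch $j_1\le 0$, and the nonconvex regime $l_{i,p}<l$) are indeed the delicate bookkeeping points; the paper treats them only sketchily (its $j_1\le 0$ branch, for instance, is left in an unfinished state), so your proposal is faithful to the paper's level of rigor rather than missing anything it supplies.
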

	
	\begin{proof}
		Note that when $l_i\ge l$, then SCAR-PM will always return $ERROR_i=FALSE$ with $\|\nabla f(x^i)\|^2\le 10l_i(f(x^{i-1}) - f(x^i))$. Therefore we have $l_i\le 4l$ for all $i\ge 1$.
		Applying Theorem~\ref{thm:conv_proposed_unconstrained_sc} to the $i$-th call to the SCAR-PM function in the NASCAR method, we have that the SCAR-PM function terminates after at most $(4 + C_1\sqrt{\max\{5M_{i-1}, 20L\}/l_{i-1}})\log_2(4\|\nabla f(x^{i-1})\|/\varepsilon)$ gradient evalutions, and that the output $M_i\le \max\{M_{i-1}/2, 2(L+2l_{i-1})\}$. Noting that $M_0\le L$ and recalling that $l_i\le 4l\le 4L$ for all $i\ge 1$ and $C_1\ge 3$, we have $M_i\le 18L$ and that the number of gradient evaluations required by the $i$-th call to the SCAR-PM function is bounded by 
		$$(4 + C_1\sqrt{90L/l_{i-1}})\log_2(4\|\nabla f(x^{i-1})\|/\varepsilon)\le 4\sqrt{10} C_1\sqrt{L/l_{i-1}}\log_2(4\|\nabla f(x^{i-1})\|/\varepsilon).$$ 	
		The total number of SCAR-PM calls is bounded by $N:=N_1 + N_2$, where $N_1:=\lceil\log_4{4l/l_0}\rceil$ counts the total number of times we need to increase $l_i$ and $N_2:=\lceil 40l(f(x_0) - f(x^*))/\varepsilon^2\rceil$ counts the total number of times when $\|\nabla f(x^i)\|^2\le 10l_i(f(x^{i-1}) - f(x^i))$. 
		
		Let us assume that the NASCAR method terminates at $i=N(\varepsilon)\le N$. Among the indices $\{1,\ldots,N(\varepsilon)\}$, let $I:=\{i_k\}_{k=1}^{K}$ (where $K\le N_2$ and $i_K = N(\varepsilon)$) be the indices at which the SCAR-PM call returns ERROR$_{i_k}=$FALSE and $\|\nabla f(x^{i_k})\|^2\le 10l_{i_k}(f(x^{i_k-1}) - f(x^{i_k}))$, and let $\bar{I}$ be the rest of the indices when $l_i$ is replaced by $4l_i$. Noting that for all $i\in\bar I$ we always set $x^i = x^{i-1}$, we conclude that $\|\nabla f(x^{i_k})\|^2\le 10l_{i_k}(f(x^{i_{k-1}}) - f(x^{i_k}))$ for all $k\ge 1$. Here for convenience we set $i_0 = 0$. For all $i\in\bar I$, we have $l_i = 4l_{i-1}$. Specially, the total number of gradient evaluations required by all the $i$-th SCAR-PM calls with $i\in (i_{k-1},i_k]$ is bounded by
		\begin{align*}
			& \sum_{i=i_{k-1}+1}^{i_k}4\sqrt{10} C_1\sqrt{L/l_{i-1}}\log_2(4\|\nabla f(x^{i-1})\|/\varepsilon)
			\\
			\le & 8\sqrt{10}C_1\sqrt{L/l_{i_{k-1}}}\log_2(4\|\nabla f(x^{i_{k-1}})\|/\varepsilon).
		\end{align*}
		Therefore the total number of gradient evaluations required by the NASCAR method is bounded by
		\begin{align*}
			& \sum_{k=1}^{K}8\sqrt{10}C_1\sqrt{L/l_{i_{k-1}}}\log_2(4\|\nabla f(x^{i_{k-1}})\|/\varepsilon)
			\\
			= & 8\sqrt{10}C_1\sqrt{L/M_{0}}\log_2(4\|\nabla f(x^{0})\|/\varepsilon) + \sum_{k=1}^{K-1}8\sqrt{10}C_1\sqrt{L/l_{i_{k}}}\log_2(4\|\nabla f(x^{i_{k}})\|/\varepsilon).
		\end{align*}
		Here recalling that $\|\nabla f(x^{i_k})\|^2\le 10l_{i_k}(f(x^{i_{k-1}}) - f(x^{i_k}))$, by  Lemma~\ref{lem:tech_nc} (with $4\varepsilon$ instead of $\varepsilon$, $y_i=4\|\nabla f(x^{i_k})\|$, $\Delta = 160(f(x^0) - f(x^*))$, $N=K-1$, and $c_i = \sqrt{l_{i_k}}$) we have
		\begin{align*}
			\sum_{k=1}^{K-1}(1/\sqrt{l_{i_k}})\ln(\|\nabla f(x^{i_k})\|/\varepsilon) \le 20\sqrt{l}(f(x^0) - f(x^*))/\varepsilon^2.
		\end{align*}
		Thus the total number of gradient evaluations required by the NASCAR method is bounded by $8\sqrt{10}C_1\sqrt{L/M_{0}}\log_2(4\|\nabla f(x^{0})\|/\varepsilon) + 160\sqrt{10}C_1\sqrt{Ll}(f(x^0) - f(x^*))/\varepsilon^2$.
	\qedsymbol\end{proof}
	
	\vgap
	
	In the above theorem, we need input arguments to satisfy $l_0\le l$ and $M_0\le 4L$. We can compute such arguments by an initialization procedure described in Algorithm \ref{alg:proposed_NASCAR_init}.
	
	\begin{algorithm}[h]
		\caption{\label{alg:proposed_NASCAR_init}$(l_0, M_0, \hat x)$ = NASCAR-Init($x^0$, $\varepsilon$)}
		\begin{algorithmic}
			\State Set $M_0 = \|\nabla f(y_0) - \nabla f(z_0)\|/\|y_0 - z_0\|$ for any $z_0\not = y_0$.
			\State Set $\tilde l_0 = M_0$.
			\For{$i=1,\ldots,$}
				\State Set $\tilde F^{(i)}(x):=f(x) + \tilde l_{i-1}\|x - x^0\|^2$.
				\State \cmo{ Set $\hat x^i$ = SCAR-PM($f$, $\varepsilon$, $x^0$, $\tilde l_{i-1}$, $M_0$).}
				\State \cmo{ If $\|\nabla f(\hat x^i)\|\le\varepsilon$, then \textbf{terminate} and report the approximate solution $\hat x^i$. }
				\State Set $(\tilde x^i, \tilde M_i, $ ERROR$_i$) = SCAR-PM($\tilde F^{(i)}$, $\varepsilon/4$, $x^0$, $\tilde l_{i-1}$, $M_0$)
				\State If $\|\nabla f(\tilde x^i)\|\le\varepsilon$, then \textbf{terminate} and report the approximate solution $\tilde x^i$. 
				\State If ERROR$_i$~=FALSE or $\|\nabla f(\tilde x^i)\|^2>10\tilde l_{i-1}(f(x^0) - f(\tilde x^i))$, then \textbf{terminate} with $l_0 = \tilde l_{i-1}$. Otherwise, set $\tilde l_i = \tilde l_{i-1}/4$.
			\EndFor
		\end{algorithmic}
	\end{algorithm} 

	\cmo{ When solving nonconvex problems, s}imilar to the NASCAR algorithm, the above NASCAR-Init procedure keeps calling the SCAR-PM function to compute approximate solution $\tilde x^i$ with $\|\nabla \tilde F^{(i)}(\tilde x^i)\|\le\varepsilon/4$. One major difference is that the NASCAR-Init is looking for a value $\tilde l_{i-1}$ with which we can confirm that the function $\tilde F^{(i)}$ is not $\tilde l_{i-1}$-strongly convex. Such value $\tilde l_{i-1}$ is guaranteed to satisfy $\tilde l_{i-1}\le l$. Note that it is also possible for us to compute an approximate solution \cmo{ $\hat x=\tilde x^i$ or $\hat x^i$ with the desired accuracy $\|\nabla f(\hat x)\|\le\varepsilon$} in the NASCAR-Init procedure. \cmo{ Such approximate solution $x$ can be computed for strongly convex, convex, and nonconvex problems without the knowledge of the problem's convexity status.}
	In the following proposition, we describe the total number of gradient evaluations required by NASCAR-Init.
	
	\vgap
	\begin{pro}
		\label{pro:NASCAR-init}
		Suppose that $\|\nabla f(x^0)\|>\varepsilon$. The NASCAR-Init procedure will terminate with either an approximate solution $\hat x$ such that $\|\nabla f(\hat x)\|\le \varepsilon$, or input arguments $l_0\le l$ and $M_0\le 4L$ that can be used in the NASCAR algorithm. Moreover, when \cmo{ the function $f$ is strongly convex, convex, or nonconvex with lower curvature constant} $l\le \varepsilon^2/(10(f(x^0) - f(x^*)))$, NASCAR-Init is guaranteed to terminate with $\|\nabla f(\hat x)\|\le \varepsilon$.
		The total number of gradient evaluations of $\nabla f$ required by the NASCAR-Init procedure is bounded by
		\cmo{
		\begin{itemize}
			\item $8\lceil\log_{4} (M_0/\mu) \rceil + 4C_1\sqrt{30L/\mu}\log_2(4\|\nabla f(x^0)\|/\varepsilon)$ for strongly convex $f$;
			\item $64 C_1\sqrt{30L\operatorname{dist}(x_0,X^*)/\varepsilon}$
			for convex $f$; and
			\item $4C_1\max\{\sqrt{30L/M_0}, 10\sqrt{2L(f(x^0) - f(x^*))}/\varepsilon\}\lceil\log_2(4\|\nabla f(x^0)\|/\varepsilon)\rceil$\\$+8\lceil\max\{1, 2\log_4(\sqrt{40L(f(x^0) - f(x^*))}/\varepsilon)\}\rceil
			$
			for nonconvex $f$.
		\end{itemize} 
		H}ere $C_1$ is the universal constant described in Theorem~\ref{thm:conv_proposed_unconstrained_searchL}.
	\end{pro}
	\begin{proof}
		\cmo{ We start by showing that the NASCAR-Init procedure will terminate for strongly convex, convex, and nonconvex problems. In the strongly convex case, observing that $\mu\le M_0\le L$ and that the function call SCAR-PM$(f,\varepsilon,x^0,\tilde{l}_{i-1}, M_0)$ will terminate with an approximate solution whenever $\tilde l_{i-1}\le \mu$, we conclude that NASCAR-Init is guaranteed to terminate at $i=\tilde N_{sc}$, where $\tilde N_{sc} \le \lceil\log_{4} (M_0/\mu) \rceil$.

		\cmo{ In the nonconvex case, a}s described previously in the proof of the NASCAR algorithm, if either ERROR$_i$~=FALSE or $\|\nabla f(\tilde x^i)\|^2>10\tilde l_{i-1}(f(x^0) - f(\tilde x^i))$, then we can confirm that $\tilde l_{i-1}\le l$. In such case, NASCAR-Init will terminate with $l_0 = \tilde l_{i-1}\le l$ and $M_0\le L$. On the other hand, if for all $i$ we always have ERROR$_i$~=FALSE and $\|\nabla f(\tilde x^i)\|^2\le 10\tilde l_{i-1}(f(x^0) - f(\tilde x^i))$, then whenever $\tilde l_{i-1}\le \varepsilon^2/(10(f(x^0) - f(x^*)))$ we have $\|\nabla f(\tilde x^i)\|^2\le \varepsilon^2$. Therefore, we can conclude that NASCAR-Init will terminate either with $\|\nabla f(\hat x)\|\le \varepsilon$, or $l_0\le l$ and $M_0\le 4L$. The former case is guaranteed to happen when $l\le \varepsilon^2/(10(f(x^0) - f(x^*)))$.
		Noting that $\tilde l_{i-1}\le \varepsilon^2/(10(f(x^0) - f(x^*)))$ whenever $i \ge \log_4(40M_0(f(x^0) - f(x^*))/\varepsilon^2)$, we can also conclude that NASCAR-Init is guaranteed to terminate at $i= \tilde N_{nc}$, where $\tilde N_{nc}\le \lceil\max\{1, \log_4(40M_0(f(x^0) - f(x^*))/\varepsilon^2)\}\rceil$. }
		
		\cmo{ Applying Theorem \ref{thm:conv_proposed_unconstrained_sc}, for each iteration $i$, the total number of gradient evaluations required in the two SCAR-PM calls is bounded by $(4+C_1\sqrt{10L/\tilde l_{i-1}})\lceil\log_2(\|\nabla f(x^0)\|/\varepsilon)\rceil + (4+C_1\sqrt{10(L+2\tilde l_{i-1})/\tilde l_{i-1}})\lceil\log_2(4\|\nabla f(x^0)\|/\varepsilon)\rceil$. Observing that $\tilde l_{i-1}\le \tilde l_0=M_0\le L$ and $\tilde l_{i-1} = \tilde l_{0}4^{-i+1}$, the total number of gradient evaluations required by NASCAR-Init after $\tilde N$ iterations is bounded by
			\begin{align}
				\label{eq:tmp}
		\begin{aligned}
			& \sum_{i=1}^{\tilde N}(4+2^{i-1}C_1\sqrt{10L/\tilde l_{0}})\lceil\log_2(\|\nabla f(x^0)\|/\varepsilon)\rceil + (4+2^{i-1}C_1\sqrt{30L/\tilde l_0})\lceil\log_2(4\|\nabla f(x^0)\|/\varepsilon)\rceil
			\\
			\le & 8\tilde N + 2^{\tilde N+1}C_1\sqrt{30L/ M_0}\lceil\log_2(4\|\nabla f(x^0)\|/\varepsilon)\rceil
			.
		\end{aligned}
		\end{align}
	Substituting $\tilde N$ by $\tilde N_{sc}$ and $\tilde N_{nc}$ respectively, we obtain the gradient evaluation complexities for strongly convex and nonconvex cases. 
	
	In the convex case, 
	let $j$ be the integer such that $\tilde l_0\in [2^{j-1}\varepsilon/\operatorname{dist}(x_0,X^*), 2^{j}\varepsilon/\operatorname{dist}(x_0,X^*)]$. Observe that whenever $\tilde l_{i-1} \le 2\varepsilon/\operatorname{dist}(x_0,X^*)$, then the function call SCAR-PM$(f,\varepsilon,x^0,\tilde{l}_{i-1}, M_0)$ will terminate with an approximate solution $\hat x^i$. This is because SCAR-PM$(f,\varepsilon,x^0,\tilde{l}_{i-1}, M_0)$ calls function $(y_1,M_1)=$AR$(f,x^0,\tilde{l}_{i-1}/10, M_0)$ first, while by Theorem \ref{thm:conv_proposed_unconstrained_searchL} such AR function call will always yield an solution $y_1$ with $\|\nabla f(y_1)\|\le \tilde{l}_{i-1}\operatorname{dist}(x_0,X^*)/2$. 
	If $j>1$, note that $\tilde l_{i-1} = 4^{-i+1}M_0 <2\varepsilon/\operatorname{dist}(x_0,X^*)$ whenever $i\ge (j+1)/2$. Observing that $\tilde l_{i-1} = \tilde l_0 4^{-i+1}$, applying Theorem \ref{thm:conv_proposed_unconstrained_sc} and using the alternative bound, the total number of gradient evaluations is bounded by
	\begin{align}
		\begin{aligned}
			&\sum_{i=1}^{\lceil (j+1)/2\rceil}(4+2^{i-1}C_1\sqrt{10L/\tilde l_{0}})\max\{1, j-2i+2\} 
			\\
			& \qquad\qquad + (4+2^{i-1}C_1\sqrt{30L/\tilde l_{0}})\max\{1, j-2i+2\}
			\\
			\le & \sum_{i=1}^{\lceil (j+1)/2\rceil}2^{i+1}C_1\sqrt{30L/\tilde l_{0}}\cdot( j-2i+3)
			\\
			\le & \sum_{i=1}^{\lceil (j+1)/2\rceil}2^{i-(j-1)/2+1}C_1\sqrt{30L\operatorname{dist}(x_0,X^*)/\varepsilon}\cdot( j-2i+3)
			\\
			\le & 64 C_1\sqrt{30L\operatorname{dist}(x_0,X^*)/\varepsilon}.
		\end{aligned}
	\end{align}
	Here in the last inequality we use the fact that 
	\begin{align}
	\begin{aligned}
		& \sum_{i=1}^{\lceil (j+1)/2\rceil}2^{i-(j-1)/2+1}( j-2i+3)
		\\
		= &  \sum_{i=1}^{\lceil (j+1)/2\rceil}2^{ \lceil (j+1)/2\rceil-i+1-(j-1)/2+1}( j-2 \lceil (j+1)/2\rceil+ 2i+1)
		\\
		\le &  \sum_{i=1}^{\lceil (j+1)/2\rceil}2^{ 4-i}\cdot( 2i) \le  32 \sum_{i=1}^{\infty}2^{-i}\cdot i=  64 .
	\end{aligned}
\end{align}	
	
	}
		 
	\qedsymbol\end{proof}
	
	\vgap
	
	From Theorem \ref{thm:NASCAR_conv} and Proposition \ref{pro:NASCAR-init}, when combined with NASCAR-Init, our proposed NASCAR algorithm computes 
	a solution $\hat x$ with $\|\nabla f(\hat x)\|\le\varepsilon$ within at most $\cO(1)\sqrt{Ll}(f(x^0) - f(x^*))/\varepsilon^2 + \cO(1)(\sqrt{L(f(x^0) - f(x^*))}/\varepsilon)\log(\|\nabla f(x^0)\|/\varepsilon)$ gradient evaluations of $\nabla f$. 
	Here the term $\cO(1)\sqrt{Ll}(f(x^0) - f(x^*))/\varepsilon^2 $ is dominant when $l\gg \varepsilon^2/(10(f(x^0) - f(x^*)))$. Moreover, when $l\le \varepsilon^2/(10(f(x^0) - f(x^*)))$, by Proposition \ref{pro:NASCAR-init} NASCAR-Init is already able to compute 
	a solution $\hat x$ with $\|\nabla f(\hat x)\|\le\varepsilon$. The complexity in such case is $\cO(1)(\sqrt{L(f(x^0) - f(x^*))}/\varepsilon)\log(\|\nabla f(x^0)\|/\varepsilon)$, which is nearly optimal (with an extra logarithmic factor) comparing with the lower complexity bound of gradient minimization for convex functions (see \cite{carmon2020lower}). It should be pointed out that NASCAR achieves such complexity without requiring the knowledge of the convexity of $f$. 
	\cmo{Moreover, NASCAR-Init and NASCAR combined together could compute an approximate solution without any knowledge of the problem parameters or the convexity status of the objective function.}

	\section{Concluding remarks}
	\label{sec:conclusion}
	
	In this paper, we propose novel algorithms for minimizing (projected) gradients with optimal complexity. We develop an accumulative regularization strategy and design algorithms that are able to solve approximate solutions with small (projected) gradients with optimal complexities for the convex and strongly convex cases, and update the best known complexity for the nonconvex cases. Our proposed algorithms can be made parameter-free, maintaining these complexity bounds without the need for any problem parameters \cmo{ or the convexity status of the problem. In addition, we also extend our algorithms to constrained and composite optimization problems.}
	
	\cmo{Three} potential question is still open for future research. \cmo{ First, t}he key strategy in our paper is accumulative regularization, which requires us to solve a series of proximal mapping subproblems consecutively. It will be interesting to study whether the algorithm framework can be further simplified, e.g., to a single-loop implementation. Similar questions are also open in the parameter-free implementation: in the strongly convex case, we need to call a series of the AR function to compute an approximate solution. In the nonconvex case, we need to call a series of the SCAR function. It would be interesting to study whether the implementation of parameter-free algorithms can be simplified. 
	\cmo{ 
	Second, the parameters of our algorithms  depend on the accuracy threshold $\varepsilon$ of desired approximate solution. While the value of $\varepsilon$ is necessary in determining the termination of algorithms, it will be interesting to study whether an algorithm could be developed in which the parameters do not depend on the value of $\varepsilon$. 
    Assuming that the target accuracy is unknown, 
    one straightforward strategy is to set the target accuracy to be
    $1/2$ of the current gradient norm when calling our algorithms, and repeat this procedure multiple times until the desired accuracy is reached.
    This strategy will allow us to be more adaptive to the target accuracy without impacting 
    the order of the total gradient complexity. However, it would be an interesting research topic to study whether a better implementation than the aforementioned straightforward strategy can be developed.
    Third, for practical purpose, our proposed algorithm should likely be tuned, for example, to allow more flexible selection of the parameter $\sigma_s$ and $N_s$, early termination, or warm start among different calls to subroutine $\cA$. Due to the length of our paper, we defer some of the above research questions for future studies.
    }


%
%

\bibliographystyle{spmpsci}      
\bibliography{yuyuan}


\end{document}